\documentclass{amsart}
\usepackage{amssymb}
\usepackage{amsmath}
\usepackage{amsthm}
\usepackage{bbm}
\usepackage{mathrsfs}
\usepackage{amsmath}
\usepackage{tikz,colortbl,xcolor,graphicx,subfigure}
\makeatletter

\newcommand{\Rmnum}[1]{\expandafter\@slowromancap\romannumeral #1@}
\makeatother

\usepackage{xcolor}
\usepackage{amssymb}
\usepackage{amsmath,amsthm}
\usepackage{mathrsfs}

\usepackage{latexsym,euscript,dsfont}
\usepackage{bbm}
\usepackage{colortbl}
\usepackage{cite}
\usepackage{amsmath,amsthm,amsfonts,amssymb}
\usepackage{latexsym}
\usepackage{enumerate}
\usepackage{mathrsfs}
\def\bc{\begin{center}}
\def\ec{\end{center}}
\def\be{\begin{equation}}
\def\ee{\end{equation}}

\newtheorem{lem}{Lemma}[section]

\newtheorem{dfn}[lem]{Definition}
\newtheorem{pro}[lem]{Proposition}

\newtheorem{thm}{Theorem}

\newtheorem{rem}{Remark}
\numberwithin{equation}{section}

\usepackage{colortbl}

\begin{document}
\title[Full Topological entropy of Xiong chaotic sets beyond the specification property]{Full Topological entropy of Xiong chaotic sets beyond the specification property}
\author{Xinyun Zhang}
\address{School of Mathematics and Information Science, Nanchang Hangkong University, Nanchang 330063, P. R. China}
\email{xinyunzhangnc@163.com}
\keywords{Xiong chaos, topological entropy, specification property}
\thanks{This work was supported by initial fund for Doctors (No. EA202407262)}
\maketitle
\begin{abstract}
 In this paper, we show that for a shift dynamical system $(X_{\mathcal{L}}, \sigma)$ with a weakened form of the specification property, there exists a  Xiong chaotic set $C$ of $X_{\mathcal{L}}$ with full topological entropy everywhere (i.e. the intersection of $C$ and arbitrary non-empty open subset of $X_{\mathcal{L}}$ has
full topological entropy). Moreover, $C$ satisfies that for any non-empty subset $A$ and any continuous map $F: A\rightarrow X_{\mathcal{L}},$ there exists an increasing sequence $\{p_{k}\}_{k=1}^{+\infty}$ of positive integers such that $\lim\limits_{k\rightarrow +\infty}\sigma^{p_{k}}(x)=F(x)$ holds for any $x\in A.$
\end{abstract}

\section{introduction}

The notion of chaos was first mathematically formulated by Li and Yorke\cite{Li.T} in 1975, who introduced the concept of a scrambled set and proved that a map with a period-3 orbit is chaotic. Let $\big(X, f\big)$ be a topological dynamical system with metric $d$, a non-empty set $C\subset X$ is said to be chaotic in the sense of Li-Yorke (or scrambled) if
$$\liminf\limits_{n\rightarrow+\infty}d\big(f^{n}(x), f^{n}(y)\big)=0 \ \textup{and}\ \limsup\limits_{n\rightarrow+\infty}d\big(f^{n}(x), f^{n}(y)\big)>0$$
for any two distinct points $x, y\in C.$

In this definition, the erratic property of the orbits of points in $C$ is described by comparing the full processes of the movement of any two distinct points in $C.$ In order to reveal the highly erratic time independence of points in weakly mixing system, Xiong introduced a kind of chaos in \cite{Xiong1} and \cite{Xiong2}, known as Xiong chaos. More precisely, a non-empty set $C\subset X$ is said to be Xiong chaotic if for any subset $A$ of $C$ and any continuous map $F: A\rightarrow X,$ there exists a strictly increasing sequence of positive integers $\{p_{k}\}_{k=1}^{+\infty}$ such that
$$\lim\limits_{k\rightarrow+\infty}f^{p_{k}}\big(x\big)=F\big(x
\big)$$
for any $x\in A.$

By comparing two definitions above, it is clear that every Xiong chaotic set is a chaotic set in the sense of Li-Yorke. Xiong chaos concerns the simultaneous behavior of finitely many points under a common sequence of iterates, and has become a central topic in topological dynamical systems. 

In \cite{Xiong2}, Xiong showed that for a dynamical system $\big(X, f\big)$ with $X$ a separable locally compact metric space containing at least two points, $f$ is weakly mixing if and only if there exists a $F_{\sigma},$ $c$-dense  Xiong chaotic subset $C$ of $X,$ where $c$-dense means that for any non-empty open set $U$ of $X$, the intersection of $C$ and $U$ is uncountable. In \cite{Xiong3}, Xiong studied the Hausdorff dimension theory of Xiong chaotic sets, establishing the existence of Xiong chaotic sets with full Hausdorff dimension everywhere in the full shift over finite symbols, where full Hausdorff dimension everywhere means that the intersection of this set and any non-empty open subset of $X$ has full Hausdorff dimension.  Wu and Tan generalized this result to the full shift over countable symbols in \cite{W.T07chaos}. In \cite{Lau-Shu}, Lau and Shu constructed a Xiong chaotic set with full topological entropy everywhere (can be understood as similar to ``full Hausdorff dimension everywhere'') for positively expansive system with specification property. We say that a compact dynamical system $\big(X, f\big)$ has the specification property if
there exists a single orbit to interpolate between different pieces of orbits, up to a pre-assigned error. Subsequently, Xiao \cite{Xiao} constructed a multiply Xiong chaotic set with full topological entropy everywhere for positively expansive systems with specification property. Recently, Zhong and Chen \cite{Zhong-Chen} introducded the notions of Xiong chaos and strong Xiong chaos in set-valued dynamical systems.


 
In this paper, we study the Xiong chaotic phenomena in  shift dynamical systems $(X_{\mathcal{L}}, \sigma)$ with a weakened form of specification property.

Given a shift space $X,$ we write $\mathcal{L}$ for the language of $X$ --- that is, the collection of all finite words that appear in sequences $x\in X.$ For shift dynamical system, the specification corresponds to the ability to freely concatenate words using connecting words of fixed length. The specification property is a strong hypothesis for dynamical system. There are many shift dynamical system that can be shown to not having the specification property, for example, the $\beta$-shift, since the set of $\beta$ such that $\beta$-shifts has the specification property only has zero Lebesgue measure. But every $\beta$-shifts has a collection of ``good'' words on which specification holds. That motivates this paper.

There has been a resurgence in interest in specification properties recently, due to important contributions by Pfister and Sullivan (almost specification \cite{PS07}, \cite{Yam09}, \cite{Tho12}), and Varandas (non-uniform specification \cite{Var10}). In order to generalize the result of \cite{Lau-Shu} to a wider range of examples, we ask for the specification property to hold only for words taken from a ``good'' subset $\mathcal{G}$ of $\mathcal{L},$ which is another direction to weaken the specification property.

The paper is organized as follows. In section 2, we introduce some preliminaries and state our main result, a condition for Xiong chaos. Section 3 is devoted to showing the proof of the main result.

\section{Preliminaries and main result}
~
\subsection{LANGUAGES FOR SHIFTS}
We start by recalling the relationship between shift spaces and languages. For further background and proofs see \cite{BH86,LM95} and references therein.

Let $P\geq2$ be an integer, we consider the set $\big\{1, 2, \ldots, P\big\}$ and endow it with discrete topology. Let $\big\{1, 2, \ldots, P\big\}^{\mathbb{N}}$ be the collection of all finite words in the symbols $1, 2, \ldots, P.$
Here $\big\{1, 2, \ldots, P\big\}^{\mathbb{N}}$  is equipped with the metric $$d\big(x, y\big)=2^{-\min\{n:\;x_{n}\neq y_{n}\}}.$$

Define $\sigma : \big\{1, 2, \ldots, P\big\}^{\mathbb{N}} \rightarrow \big\{1, 2, \ldots, P\big\}^{\mathbb{N}}$ be the shift map as follows:\ for $x=x_{1}x_{2}\ldots\in \big\{1, 2, \ldots, P\big\}^{\mathbb{N}}$,
$$\sigma\big(x\big)=x_{2}x_{3}\ldots,$$
i.e. the $(n+1)$-th digit of $x$ is exactly the $n$-th digit of $\sigma\big(x\big)$ for all $n\geq1.$
It is clear that $\sigma$ is a continuous map and the pair $\big(\big\{1, 2, \ldots, P\big\}^{\mathbb{N}}, \sigma\big)$ is a compact dynamical system which is known as the full shift.
\begin{dfn}[see \cite{Clim12}]
A (one-sided) language $\mathcal{L}\subset\big\{1, 2, \ldots, P\big\}^{\mathbb{N}}$ is a collection of words such that
\begin{itemize}
\item[(1)]
if $\omega\in\mathcal{L}$ and $v$ is a subword of $\omega,$ then $v\in\mathcal{L};$
\item[(2)]
if $\omega\in\mathcal{L},$ then there exists $v\in\big\{1, 2, \ldots, P\big\}$ such that $\omega v\in\mathcal{L}.$
\end{itemize}
\end{dfn}
Here and hereafter, juxtaposition denotes concatenation---that is, for any $m,n \in \mathbb{N},$ given two words $\omega=\omega_{1}\ldots\omega_{m},$ and $v=v_{1}\ldots v_{n},$ we write $\omega v=\omega_{1}\ldots\omega_{m}v_{1}\ldots v_{n}.$ Sometimes we also use the symbol $``\sqcup"$ to represent the concatenation operation.

Given a one-sided language $\mathcal{L},$ let $X_{\mathcal{L}}\subset\big\{1, 2, \ldots, P\big\}^{\mathbb{N}}$ be the collection of all sequences $x_{1}x_{2} \ldots$ such that for every $1\leq i\leq j<\infty,$
$$x_{i}x_{i+1} \ldots x_{j-1}x_{j}\in\mathcal{L}.$$
Then $X_{\mathcal{L}}$ is a closed $\sigma$-invariant set. And the pair $\big(X_{\mathcal{L}}, \sigma\big)$ is also a dynamical system which is known as the shift dynamical system associated with the language $\mathcal{L}.$

Note that $X_{\mathcal{L}}$ is also equipped with the metric $$d\big(x, y\big)=2^{-\min\{n:\;x_{n}\neq y_{n}\}}.$$
It is obvious that $\big(X_{\mathcal{L}}, \sigma\big)$ is also a compact dynamical system.

For $x\in X_{\mathcal{L}},$ and $m<n,$ we write $x\big([m, n]\big)=x_{m}x_{m+1} \ldots x_{n }$ for the word that appears in positions $m$ through $n,$ and write $\big|x\big|$ for the length of the word. Let $\mathcal{L}_{n}$ denote the collection of all words of length $n$ in $\mathcal{L}.$ Moreover, for given $\omega\in\mathcal{L},$ we define the cylinder determines in $X_{\mathcal{L}}$ as follows:
$$\big[\omega\big]:=\Big\{x\in X_{\mathcal{L}}|\ x_{i}=\omega_{i}\ {\rm for\ all} \ 1\leq i\leq \big|\omega\big|\Big\}.$$
\subsection{TOPOLOGICAL ENTROPY}

Let $\big(X, f\big)$ be a topological dynamical system. Fix any integer $n\geq 1,$ denote the Bowen metric with length $n$ by $$d_{n}\big(x, y\big)=\max\limits_{0\leq i\leq n-1}d\big(f^{i}(x), f^{i}(y)\big).$$ For any two subsets $E, F$ of $X$ and any integer $n\geq 1,$ define the Bowen distance with length $n$ between them by $$d_{n}\big(E, F\big)=\inf\limits_{x\in E,\, y\in F} d_{n}\big(x, y\big)$$ and the $n$-th Bowen diameter by $$\textup{diam}_{d_{n}}\big(E\big) = \sup\limits_{x, y\in E} d_{n}\big(x, y\big).$$ The $\epsilon$-ball in $\big(X, d_{n}\big)$ is denoted by $$B_{n}\big(x, \epsilon\big)=\big\{y\in X\ \big|\ d_{n}(x, y) <\epsilon\big\}.$$

\begin{dfn}[See \cite{Y.B.}]\label{t1}
Let $\big(X, f\big)$ be a dynamical system and $E$ be a subset of $X.$ We define, for $\epsilon>0, s>0,$ and $N>0,$
$$m\big(E; s, N, \epsilon\big)=\inf_{\Gamma}\sum_{i}e^{-sn_{i}},$$
\noindent where $\Gamma=\big\{B_{n_{i}}(x_{i}, \epsilon)\big\}$ is any collection of $n_{i}$-th Bowen balls, with $\min\limits_{i}\ n_{i}> N,$ that covers $Y.$ Note that $m\big(E; s, N, \epsilon\big)$ is not decreasing with respect to $N,$ define
$$m\big(E; s, \epsilon\big)=\lim_{N\rightarrow +\infty}m\big(E; s, N, \epsilon\big).$$
\noindent The critical value $h_{top}\big(f, E; \epsilon\big)$ is defined as
\begin{equation*}
h_{top}\big(f, E; \epsilon\big)=\left\{
\begin{aligned}
& +\infty, \quad\quad\quad\quad\   \text{if $\big\{s\ \big|\ m\big(E; s, \epsilon\big)<+\infty\big\}=\emptyset$} ;\\
& \inf \big\{s\geq0\ |\ m\big(E; s, \epsilon\big)<+\infty\big\},
 \quad\quad  \text{otherwise}.
\end{aligned}\right.
\end{equation*}

\noindent Note that $h_{top}\big(f, E; \epsilon\big)$ is not increasing with respect to $\epsilon,$ then the topological entropy of $E$ is then defined as $$h_{top}\big(f, E\big)=\lim\limits_{\epsilon\rightarrow0}\,h_{top}\big(f, E; \epsilon\big).$$
If $E=X,$ then $h_{top}\big(f, E\big)$ is the topological entropy of the system $\big(X, f\big),$ and we write $h_{top}\big(f\big)$ for brevity.
\end{dfn}

A dynamical system $\big(X, f\big)$ is called positively expansive if for some constant $\eta >0,$ whenever $x\neq y,$ there exists an integer $n=n\big(x, y, \eta\big)\geq 0$ such that $d\big(f^{n}(x), f^{n}(y)\big)\geq \eta.$ We call $\eta$ an expansive constant for $f.$
It is clear that any shift dynamical system is expansive. We state the following lemma, which will be used in the sequel.
\begin{lem}[See \cite{Lau-Shu} Lemma 2.4]\label{l1}
Let $\big(X, f\big)$ be a positively expansive system with $\eta> 0$ being its expansive constant. Then for any $\delta < \frac{\eta}{4}$ and any $\epsilon > 0,$ there exists $N>0$ (which depends on $\delta, \epsilon$) such that
$$d_{n+N}\big(x, y\big)\leq \delta \Rightarrow d_{n}\big(x, y\big)< \epsilon \quad for\ all\ n> 0.$$
\end{lem}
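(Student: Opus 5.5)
We argue by contradiction, using compactness of $X$ and continuity of the iterates $f^{i}$. Fix $\delta<\frac{\eta}{4}$ and $\epsilon>0$, and suppose no such $N$ exists. Then for every $k\geq1$ there are points $x_k,y_k\in X$ and an integer $n_k>0$ with
$$d_{n_k+k}(x_k,y_k)\leq\delta \quad\text{but}\quad d_{n_k}(x_k,y_k)\geq\epsilon.$$
The second inequality gives an index $0\leq i_k\leq n_k-1$ with $d\big(f^{i_k}(x_k),f^{i_k}(y_k)\big)\geq\epsilon$. Set $u_k=f^{i_k}(x_k)$ and $v_k=f^{i_k}(y_k)$. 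Since $i_k+j\leq n_k+k-1$ for $0\leq j\leq k$, the first inequality yields
$$d\big(f^{j}(u_k),f^{j}(v_k)\big)\leq\delta\quad\text{for all }0\leq j\leq k,$$
together with $d(u_k,v_k)\geq\epsilon$. Shifting to time $i_k$ in this way removes the dependence on $n_k$; this is the one step that needs care.

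By compactness, pass to a subsequence with $u_k\to u$ and $v_k\to v$. Then $d(u,v)\geq\epsilon$, so $u\neq v$. For each fixed $j$, continuity of $f^{j}$ and the bound above, which holds for all $k\geq j$, give
$$d\big(f^{j}(u),f^{j}(v)\big)\leq\delta<\eta\quad\text{for every }j\geq0.$$
This contradicts positive expansiveness, which requires some $n\geq0$ with $d\big(f^{n}(u),f^{n}(v)\big)\geq\eta$. Hence $N$ exists, and it depends only on $\delta$ and $\epsilon$.

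The only real obstacle is the uniformity in $n$, and the time shift to $i_k$ handles it. The margin $\delta<\frac{\eta}{4}$ is more than we need: any $\delta<\eta$ suffices, because the limit passes through non-strict inequalities and only $\delta<\eta$ is used at the end.
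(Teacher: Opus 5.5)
Your proof is correct and complete. The paper gives no proof of this lemma; it only cites Lau--Shu. Your argument is the standard compactness proof of this fact.

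Shifting to the time $i_k$ where the $\epsilon$-separation occurs is what makes the bound uniform in $n$. It is equivalent to a two-step version. First extract a uniform expansivity statement: there is $N$ with $d(x,y)\geq\epsilon\Rightarrow\max_{0\leq j\leq N}d(f^{j}x,f^{j}y)>\delta$. Then apply it to $f^{i}x,f^{i}y$ for each $0\leq i\leq n-1$. Your version does both steps in one pass.

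Your side remarks are also right. Only $\delta<\eta$ is used, given the paper's definition of expansive constant with $\geq\eta$. The argument does rely on compactness of $X$, which is part of the paper's standing setting and holds for $X_{\mathcal{L}}$.
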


The following gives an alternative definition of topological entropy.

For $\epsilon> 0$ and a positive integer $n,$ we say that a set $E\subset X$ is $\big(n, \epsilon\big)$-separated if for every pair $x, y$ of distinct points in $E,$ implies $d_{n}\big(x,y\big)> \epsilon.$ Let $c_{n}(\epsilon)$ be the largest cardinality of any $\big(n, \epsilon\big)$-separated set. The following proposition shows that topological entropy of a positively expansive system is easy to calculate.

\begin{pro}[See \cite{GTM79}]\label{p1}
Let $\big(X, f\big)$ be a positively expansive system with $\eta$ being its expansive constant. Then for any $\epsilon < \frac{\eta}{4},$ $$h_{top}\big(f\big)=\displaystyle\lim_{n \rightarrow \infty}\frac{1}{n} \log c_{n}\big(\epsilon\big).$$
\end{pro}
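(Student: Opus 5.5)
The plan is to reduce everything to the classical Bowen formula $h_{top}(f)=\lim_{\epsilon\to0}\limsup_{n\to\infty}\frac1n\log c_n(\epsilon)$. For compact $X$, the Carathéodory-type quantity of Definition \ref{t1} with $E=X$ agrees with Bowen's entropy; I will quote this standard fact (Pesin's book \cite{Y.B.}) rather than reprove it. Expansiveness, through Lemma \ref{l1}, then does two jobs: it removes the limit in $\epsilon$, and it lets me replace $\limsup$ by $\lim$.

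\emph{Step 1: independence of the scale.} Fix $\delta<\eta/4$ and any $\epsilon'>0$, and let $N=N(\delta,\epsilon')$ be as in Lemma \ref{l1}. The contrapositive of Lemma \ref{l1} says: if $d_n(x,y)\ge\epsilon'$, then $d_{n+N}(x,y)>\delta$. Hence every $(n,\epsilon')$-separated set is $(n+N,\delta)$-separated, so
$$c_n(\epsilon')\le c_{n+N}(\delta)\quad\text{for all } n.$$
Dividing by $n$ and letting $n\to\infty$, the shift by the fixed $N$ is harmless. This gives $\limsup_n\frac1n\log c_n(\epsilon')\le\limsup_n\frac1n\log c_n(\delta)$, and the same inequality for $\liminf$. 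The reverse inequality for $\epsilon'<\delta$ is trivial, since $c_n(\cdot)$ is nonincreasing in $\epsilon$. Thus both $\limsup_n\frac1n\log c_n(\delta)$ and $\liminf_n\frac1n\log c_n(\delta)$ are independent of $\delta\in(0,\eta/4)$. Letting $\epsilon'\to0$, the $\limsup$ equals $h_{top}(f)$.

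\emph{Step 2: existence of the limit.} This is the main obstacle, since $c_n(\epsilon)$ itself need not be submultiplicative. I will pass to cover numbers. Let $\mathcal N(n,\epsilon)$ be the minimal cardinality of a cover of $X$ by sets of $d_n$-diameter at most $\epsilon$. These numbers are submultiplicative: $\mathcal N(n+m,\epsilon)\le\mathcal N(n,\epsilon)\mathcal N(m,\epsilon)$, obtained by intersecting a $d_n$-cover with the $f^{-n}$-preimage of a $d_m$-cover. By Fekete's lemma, $a(\epsilon)=\lim_n\frac1n\log\mathcal N(n,\epsilon)$ exists. The standard comparisons
$$\mathcal N(n,2\epsilon)\le c_n(\epsilon)\le\mathcal N(n,\epsilon)$$
(a maximal separated set is $\epsilon$-spanning, and a set of diameter at most $\epsilon$ contains at most one point of an $(n,\epsilon)$-separated set) then yield
$$a(2\epsilon)\le\liminf_n\tfrac1n\log c_n(\epsilon)\le\limsup_n\tfrac1n\log c_n(\epsilon)\le a(\epsilon).$$

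To close the gap, I apply Step 1 with $\delta=\epsilon<\eta/8$ and $\epsilon'=\epsilon/2$. The $\liminf$ and $\limsup$ at scale $\epsilon/2$ equal those at scale $\epsilon$. Applying the displayed chain at scale $\epsilon/2$ gives
$$\limsup_n\tfrac1n\log c_n(\epsilon)=\limsup_n\tfrac1n\log c_n(\epsilon/2)\le a(\epsilon/2)\le\liminf_n\tfrac1n\log c_n(\epsilon/4)=\liminf_n\tfrac1n\log c_n(\epsilon).$$
So the limit exists for small $\epsilon$, and by Step 1 for every $\epsilon<\eta/4$. Its value is $h_{top}(f)$.
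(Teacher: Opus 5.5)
Your proof is correct. The paper gives no argument of its own for this proposition; it simply defers to \cite{GTM79}, whose setting is expansive homeomorphisms. Your write-up is therefore an independent, self-contained route, adapted to the one-sided, non-invertible shift that the paper actually needs.

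In Step 1 you use the forward-only uniform expansivity of Lemma \ref{l1}, the natural replacement for the two-sided version available for homeomorphisms, to get $c_n(\epsilon')\le c_{n+N}(\delta)$. This makes both $\limsup_n\frac1n\log c_n(\delta)$ and $\liminf_n\frac1n\log c_n(\delta)$ constant on $(0,\eta/4)$. It also identifies the former with $h_{top}(f)$ through Bowen's separated-set formula.

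In Step 2 you close the gap between $\limsup$ and $\liminf$. You sandwich $c_n$ between the submultiplicative cover numbers, $\mathcal{N}(n,2\epsilon)\le c_n(\epsilon)\le\mathcal{N}(n,\epsilon)$, apply Fekete's lemma, and let the scale independence from Step 1 absorb the factor-of-two losses. This is exactly what justifies the genuine limit in the statement; scale independence alone would only give the $\limsup$ version.

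Two minor remarks. First, the restriction $\epsilon<\eta/8$ in your final paragraph is unnecessary: for any $\epsilon<\eta/4$, the auxiliary scales $\epsilon/2$ and $\epsilon/4$ already lie in $(0,\eta/4)$. Second, identifying Definition \ref{t1} (with $E=X$) with Bowen's separated-set entropy on compact $X$ is an input any proof in the paper's framework must quote, so you are right to flag it explicitly.
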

~
\subsection{SPECIFICATION PROPERTY}

Literally, there are various properties that go by the name ``specification''. They are all related to the ability to use a single trajectory to approximate arbitrary orbit segments. In the standard definition of specification introduced by Bowen, the time spent transition between orbit segments has a fixed length, independent of the length of the orbit segments. For symbolic spaces, the property corresponds to being able to freely concatenate words using connecting words of fixed length. There are a number of variations on this definition, see \cite{Bow74, DGS76, PS07, Var10}.

\begin{dfn}\label{t2}
We say that a shift dynamical system $\big(X_{\mathcal{L}}, \sigma\big)$  has the specification
property if there exists $t\in \mathbb{N}$ such that for every $k\in \mathbb{N}$ and $\omega^{1}, \ldots, \omega^{k} \in \mathcal{L},$ there are $u^{1}, \ldots, u^{k-1} \in \mathcal{L}$ with $\big|u^{i}\big|\leq t$ for all $1\leq i\leq k-1,$ such that $$\omega^{1} u^{1} \omega^{2} u^{2} \vee\cdots\vee u^{k-1}\omega^{k} \in \mathcal{L},$$
where we use `` $\vee$'' to denote the concatenation of words.
\end{dfn}

In 2012, Climenhaga and Thompson formulated specification properties that apply only to words taken from a
subset of the language. And they call it the (W)-specification.

\begin{dfn}[See \cite{Clim12} Definition 2.1]\label{t2}
A collection of words $\mathcal{G} \subset \mathcal{L}$  has (W)-specification
property if there exists $t\in \mathbb{N}$ such that for any $k\in\mathbb{N}$ and words $\omega^{1}, \ldots, \omega^{k} \in \mathcal{G},$ there are $u^{1}, \ldots, u^{k-1} \in \mathcal{L}$ with $\big|u^{i}\big|\leq t$ for all $1\leq i\leq k-1,$ such that $$x:=\omega^{1} u^{1} \omega^{2} u^{2} \vee\cdots\vee u^{k-1}\omega^{k} \in \mathcal{L}.$$
\end{dfn}

\begin{rem}
We stress that, in our definition, we only ask that $x\in \mathcal{L}$. We do not
require that $x\in \mathcal{G}$.
\end{rem}
~
\subsection{DYNAMICALLY DEFINED MORAN FRACTALS}
Let $Q$ be a subset of $\mathbb{N}^{+\infty}.$ We say a word $i_{1}i_{2}, \ldots, i_{n}$ is $Q$-admissible if $i_{1}i_{2}, \ldots i_{n}i_{n+1}, \ldots \in Q$ for some $i_{n+1}i_{n+2}, \ldots \in \mathbb{N}^{+\infty}.$ Let $Q_{n}$ be the collection of all $Q$-admissible of length $n.$ We assume that $\#\big(Q_{n}\big)$ is finite for each $n,$ where the symbol $``\#\big(\cdot\big)"$ express the cardinality of a set.

\begin{dfn}[see \cite{Lau-Shu}]\label{d4}
Let $\big(X, f\big)$ be a compact dynamical system, a dynamically defined Moran fractal $F$ of $X$ modeled by $Q$ is defined by
$$F=\bigcap_{n=1} ^{\infty} \bigcup_{i_{1}i_{2}\ldots i_{n} \in Q_{n}}\Delta_{i_{1}i_{2}\ldots i_{n}}$$
where $\Delta_{i_{1}i_{2}\ldots i_{n}}$ are closed subsets (called the basic sets) in the $n$-th level which satisfy:

\begin{itemize}
\item[(C1)]
$\Delta_{i_{1}i_{2}\ldots i_{n}j}\subset \Delta_{i_{1}i_{2}\ldots i_{n}}$ for all $i_{1}i_{2}\ldots i_{n} \in Q_{n}$ and $i_{1}i_{2}\ldots i_{n}j \in Q_{n+1};$
\item[(C2)]
 $\lim\limits_{n\rightarrow +\infty} \textup{diam} \big(\Delta_{i_{1}i_{2}\ldots i_{n}}\big)=0;$
 \item[(C3)]
 (dynamical separation condition) there exists $\delta >0$ and $\big\{l_{n}\big\}_{n=1} ^{+\infty}\uparrow +\infty$ such that, for large $n$, $d_{l_{n}}\big(\Delta_{i_{1}i_{2}\ldots i_{n}}, \Delta_{j_{1}j_{2}\ldots j_{n}}\big)\geq \delta$ for any pair of two distinct words $i_{1}i_{2}\ldots i_{n}$ and $ j_{1}j_{2}\ldots j_{n}\in Q_{n}.$
\end{itemize}
\end{dfn}

\noindent For more about the dynamically Moran fractals, See \cite{Y.B.} Ch 5. The set $Q$ in the definition together with $\big\{l_{n}\big\}_{n=1} ^{+\infty},$ provides an estimation of the topological entropy of the limit set.

\begin{pro}[See \cite{Lau-Shu}]\label{p2}
For a sequence of positive integers $\big\{c_{k}\big\}_{k=1}^{+\infty},$ let $F$ be the dynamically defined Moran fractal modeled by $Q=\prod\limits_{k=1} ^{+\infty}\big\{1, 2,\ldots, c_{k}\big\}.$  If\ $\big\{l_{n}\big\}$ in \textup{(C3)} satisfies $$\lim\limits_{n\rightarrow +\infty} \frac{l_{n+1}}{l_{n}}=1$$ and $$\lim_{n\rightarrow +\infty} \sup_{i_{1}i_{2}\ldots i_{n}}\Big\{\textup{diam} _{d_{l_{n}}}\big(\Delta_{i_{1}i_{2}\ldots i_{n}}\big)\Big\}=0,$$
then for any open set $U$ which has non-empty intersection with $F,$ we have
$$h_{top}\big(f, F\cap U\big)=\liminf\limits_{n\rightarrow +\infty} \frac{\log \#(Q_{n})}{l_{n}}.
$$
\end{pro}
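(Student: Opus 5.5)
The plan is to prove the two inequalities separately: an upper bound by covering $F\cap U$ with basic sets, and a lower bound by the entropy distribution principle applied to a uniform measure on a sub-fractal of $F$ lying inside $U$. Throughout write $h:=\liminf_{n\to\infty}\frac{\log \#(Q_{n})}{l_{n}}$, and note that $\#(Q_n)=c_1c_2\cdots c_n$ since $Q$ is a full product.

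\emph{Upper bound.} Fix $\epsilon>0$. By the hypothesis $\sup_{i_1\ldots i_n}\textup{diam}_{d_{l_n}}(\Delta_{i_1\ldots i_n})\to0$, for all large $n$ every level-$n$ basic set is contained in a Bowen ball $B_{l_n}(x,\epsilon)$ centred at any of its points. The level-$n$ basic sets cover $F\supset F\cap U$. For every $N$ and every $n$ with $l_n>N$ we therefore get
$$m(F\cap U;s,N,\epsilon)\le \#(Q_n)\,e^{-sl_n}.$$
If $s>h$, take $n$ along a subsequence realising the $\liminf$. The right-hand side then tends to $0$, so $m(F\cap U;s,\epsilon)=0$. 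Hence $h_{top}(f,F\cap U;\epsilon)\le h$ for every $\epsilon$, and letting $\epsilon\to0$ gives $h_{top}(f,F\cap U)\le h$.

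\emph{Localisation for the lower bound.} Pick $x\in F\cap U$ and let $i_1i_2\ldots$ be a coding sequence with $x\in\Delta_{i_1\ldots i_m}$ for all $m$. By (C2) (I will use it in the uniform form that the diameters of level-$m$ sets tend to $0$; if only the pointwise form is available, apply it along this single branch) there is $m$ with $\Delta_{w}\subset U$, where $w=i_1\ldots i_m$. Then $F':=F\cap\Delta_w\subset F\cap U$. The set $F'$ is the dynamically defined Moran fractal whose level-$n$ words are the extensions of $w$, and there are $\#(Q_n)/(c_1\cdots c_m)$ of them. Since $l_n\to\infty$, this constant factor does not change the $\liminf$, so it suffices to show $h_{top}(f,F')\ge h$.

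\emph{Mass distribution.} Every nested sequence of nonempty compact basic sets has nonempty intersection, so each basic set meets $F$. Define $\mu$ on $F'$ by giving every level-$n$ basic set extending $w$ mass $c_1\cdots c_m/\#(Q_n)$. This is consistent by (C1) and the product structure, and by (C2) together with Kolmogorov extension (or a weak-$*$ limit of equidistributed point masses) it gives a Borel probability measure on $F'$.

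Fix $\epsilon<\delta/2$, where $\delta$ is the constant in (C3), and fix $\gamma>0$. Take any Bowen ball $B_k(y,\epsilon)$ with $k$ large, and choose $n$ with $l_n\le k<l_{n+1}$. Since $d_{l_n}\le d_k$, the ball is contained in $B_{l_n}(y,\epsilon)$, and this ball has $d_{l_n}$-diameter at most $2\epsilon<\delta$. By (C3) it can therefore meet at most one level-$n$ basic set. Hence
$$\mu(B_k(y,\epsilon))\le \frac{C}{\#(Q_n)}\le Ce^{-(h-\gamma)l_n}\le Ce^{-(h-2\gamma)k},$$
where $C=c_1\cdots c_m$. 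The last step uses $l_{n+1}/l_n\to1$, which gives $l_n\ge(1-\gamma')k$ for large $n$.

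Now let $\{B_{n_i}(x_i,\epsilon)\}$ be any cover of $F'$ with all $n_i>N$ large. Then
$$\sum_i e^{-(h-2\gamma)n_i}\ge C^{-1}\sum_i\mu(B_{n_i}(x_i,\epsilon))\ge C^{-1},$$
so $m(F';h-2\gamma,\epsilon)>0$ and $h_{top}(f,F';\epsilon)\ge h-2\gamma$. Letting $\gamma\to0$ and then $\epsilon\to0$ finishes the proof.

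The main obstacle is the lower bound. Two points need care there: controlling Bowen balls of lengths $k$ that fall strictly between consecutive $l_n$, which is exactly where the hypothesis $l_{n+1}/l_n\to1$ is used, and ensuring that (C3) forces such a ball to meet only one basic set, which is why $\epsilon$ must be chosen below $\delta/2$.
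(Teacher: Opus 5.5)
The paper does not prove this proposition; it quotes it from Lau and Shu, so there is no internal argument to compare yours against. Your proof is correct and follows the standard route for such statements:
\begin{itemize}
\item the upper bound comes from covering $F\cap U$ by level-$n$ basic sets along a subsequence realising the $\liminf$, which uses only the $d_{l_n}$-diameter hypothesis and $l_n\to\infty$;
\item the lower bound comes from an entropy distribution principle for the uniform measure on a sub-fractal inside $\Delta_w\subset U$. There, (C3) with $\epsilon<\delta/2$ forces each $B_{l_n}(y,\epsilon)$ to meet at most one level-$n$ basic set, and $l_{n+1}/l_n\to1$ transfers the resulting bound to Bowen balls of intermediate length.
\end{itemize}

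The remaining points are implicit hypotheses or routine details rather than gaps, and are worth stating explicitly.
\begin{itemize}
\item You need the basic sets to be nonempty. Without this the formula is false, since empty basic sets satisfy (C1)--(C3) vacuously while still being counted in $\#(Q_n)$. In the paper's application they are closed Bowen balls around points, so this holds there.
\item If $h=\infty$, run the lower bound with an arbitrary finite $H<h$.
\item The passage from $m(F';s,\epsilon)>0$ to $h_{top}(f,F';\epsilon)\ge s$ is the usual jump of $m(F';\cdot,\epsilon)$ from $+\infty$ to $0$ at the critical value.
\end{itemize}
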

~
\subsection{STATEMENT OF RESULT}
We consider language $\mathcal{L}$ admitting a decomposition $\mathcal{L}=\mathcal{G}\mathcal{C}$ --- that is, there are collections of words $\mathcal{G}, \mathcal{C} \subset \mathcal{L}$ such that every word in $\mathcal{L}$ can be expressed as a concatenation of a word from $\mathcal{G},$ and a word from $\mathcal{C}$ in this order.

Given such decomposition, for each $N \in \mathbb{N},$ we define the collections of words $\mathcal{G}\big(N\big)$ as follows:
$$\mathcal{G}\big(N\big):=\big\{\omega u\big|\ \omega\in\mathcal{G}, u\in\mathcal{C},\big|u\big|\leq N\big \}.$$
\noindent It is clear that $\bigcup\limits_{N\geq1}\mathcal{G}\big(N\big)=\mathcal{L}.$

The following theorem is the main result in the paper.

\renewcommand{\thethm}{\Alph{thm}}
\begin{thm}\label{T1}
Let $\big(X_{\mathcal{L}}, \sigma\big)$ be a shift dynamical system with its language $\mathcal{L}$ admitting a decompostion $\mathcal{L}=\mathcal{G}\mathcal{C},$ and suppose that the following conditions are satisfied:
\begin{itemize}
\item[(I)]
 $\mathcal{G}$ has (W)-specification;
\item[(II)]
 There exists $\tau>0$ such that for every $N \in \mathbb{N}$ and  any $\omega\in\mathcal{G}\big(N\big),$ there exists a word $u$ with $\big|u\big|=\tau$ for which $\omega u\in\mathcal{G}.$
\end{itemize}
\noindent Then there exists a Xiong chaotic set with full topological entropy everywhere.
\end{thm}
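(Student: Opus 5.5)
The plan is to follow the Lau--Shu construction, with every concatenation routed through the good collection $\mathcal{G}$. Conditions (I) and (II) together give the following gluing principle. Any word $\omega\in\mathcal{L}$ lies in some $\mathcal{G}(N)$, so by (II) it extends by a word of length exactly $\tau$ to a word of $\mathcal{G}$. Then (I) lets us concatenate arbitrarily many such words with connectors of length at most $t$. The first step is to normalise the connector lengths. The connectors $u^i$ have variable length $\le t$, and the separation condition (C3) needs positions aligned across branches. I would handle this by padding: for each word, choose an extension that makes every block in a given level have a common length, or else index the basic sets by sequences recording connector lengths, which costs only a factor at most $t^{k}$ and is negligible in entropy. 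Concretely, I would work with $\mathcal{G}$-words of a fixed length $n$. Since every $\mathcal{L}_m$-word extends by $\tau$ symbols into $\mathcal{G}$, we get $\#(\mathcal{G}\cap \mathcal{L}_{m+\tau})\ge \#\mathcal{L}_m$ up to the finitely many choices. By Proposition \ref{p1}, $\frac1n\log\#(\mathcal{G}\cap\mathcal{L}_n)\to h_{top}(\sigma)$, because $c_n(\epsilon)$ for a shift is comparable to $\#\mathcal{L}_n$.

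Next I build the Xiong chaotic part. Fix a countable dense set of cylinders $[v_1],[v_2],\dots$ and, following Xiong's scheme as in \cite{Lau-Shu}, a countable family of finite tuples of target words encoding approximations of arbitrary continuous maps on finite pieces. The set $C$ will be a countable union $\bigcup_j F_j$, where $F_j\subset[v_j]$ is a dynamically defined Moran fractal in the sense of Definition \ref{d4}. A point of $F_j$ is built as follows: start with $v_j$ made good, then alternate long blocks of freely chosen $\mathcal{G}$-words of length $n_k$ (the entropy part, $c_k\approx e^{n_k(h-\gamma_k)}$ choices) with rare \emph{control} blocks. At a prescribed sparse sequence of times $p_k$, the control block copies into the orbit a finite word that depends on the earlier free choices of that point, namely a long prefix of the target $F(x)$. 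The dependence is arranged along a fixed enumeration of all finite combinatorial patterns, so that for any $A\subset C$ and continuous $F$ one can extract a subsequence $p_{k_i}$ along which $\sigma^{p_{k_i}}x\to F(x)$ for all $x\in A$. This follows Xiong's argument: continuity of $F$ and total disconnectedness of the cylinders let us approximate $F$ by locally constant maps on finitely many clopen pieces determined by the first coordinates of $x$. All gluing uses (II) to make each piece good and (I) to join them.

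Then I verify (C1)--(C3) and the hypotheses of Proposition \ref{p2}. Basic sets are cylinders of the constructed prefixes, so (C1) and (C2) are immediate. For (C3) I take $l_n$ to be the length of the prefix at level $n$, with $\delta=1/2$: distinct free choices give distinct symbols at aligned positions. Alignment is where connector-length bookkeeping matters, and I would use Lemma \ref{l1} to absorb bounded misalignments. I would choose $n_k$ growing slowly relative to the total length, and the control blocks of total length $o(l_n)$, so that $l_{n+1}/l_n\to1$ and $\mathrm{diam}_{d_{l_n}}\to0$ holds since points sharing the level-$n$ prefix agree on $l_n$ symbols. Proposition \ref{p2} then gives $h_{top}(\sigma,F_j\cap U)=\liminf\frac{\log\#Q_n}{l_n}\ge h-\gamma$. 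Letting $\gamma\to0$ along a diagonal in the choice of $c_k$, every open set meets some $F_j$ in full entropy.

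The main obstacle I expect is making the Xiong control compatible with the Moran structure. The control blocks must depend on the point, which threatens (C3) and the product form $Q=\prod\{1,\dots,c_k\}$. I would resolve this by letting each control block be a deterministic function of the earlier free choices. Distinct branches are then already separated before the control block appears, and the count $\#Q_n$ is unchanged, so the product model remains valid.
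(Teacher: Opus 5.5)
There is a genuine gap: the plan never produces a single sequence of times. Xiong chaos needs one sequence $p_k$ that serves every point of $A$ simultaneously. So the control blocks, and each target word inside them, must sit at the same absolute positions in all points of $C$. Your plan fails to arrange this in two places.

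First, (W)-specification only gives connecting words with $|u^i|\le t$. Hence the position where a target word starts depends on that word and on everything glued before it. Padding blocks to a common length does not control the connecting word immediately in front of the target. Indexing basic sets by connector-length records only affects the entropy count. Lemma \ref{l1} cannot absorb an offset either: it compares two orbits at the \emph{same} times, whereas an offset $r\le t$ would require comparing $\sigma^{p}x$ with $\sigma^{p+r}x$, which are unrelated.

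Second, taking $C=\bigcup_j F_j$ with each $F_j\subset[v_j]$ built separately does not make the union Xiong chaotic. For $A$ meeting two pieces, you need their control blocks at common times and a single pattern defined on the prefixes of both. Nothing in your construction synchronizes the $F_j$; they even begin with words $v_j$ of different lengths. The paper handles this second point differently. It builds one fractal and moves pieces of it into open sets by a map $\Psi$ that overwrites an initial segment with a word of the same length, so that $\sigma^{p}\Psi(y)=\sigma^{p}(y)$ for large $p$. It gets past the first point only by declaring the connecting words to have a fixed length $t_k$, which (W)-specification does not supply.

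This cannot be patched under (I) and (II) as stated. Suppose $C$ is Xiong chaotic with full entropy everywhere and $h_{top}(\sigma)>0$. Then $C\cap U$ is infinite for every non-empty open $U$, and applying the definition to two distinct points with prescribed images shows that $\sigma$ is weakly mixing. But (W)-specification with $|u^i|\le t$ allows a periodic phase.

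Concretely, let $X$ be the set of sequences over $\{1,2,3\}$ in which $3$ and symbols from $\{1,2\}$ strictly alternate. Take $\mathcal{G}=\mathcal{L}$, with $\mathcal{C}$ consisting of the empty word. Then:
\begin{itemize}
\item (I) holds with $t=2$, since any list of words can be joined by connecting words of length $1$ or $2$ that restore the alternation;
\item (II) holds with $\tau=1$;
\item $h_{top}(\sigma)=\frac12\log 2>0$.
\end{itemize}
Now let $x\in C\cap[3]$ and $y\in C\cap[1]$. Then $\sigma^{p}x\in[3]$ only for even $p$, and $\sigma^{p}y\in[3]$ only for odd $p$. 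So the constant map $F\equiv(31)^{\infty}$ on $A=\{x,y\}$ admits no sequence $p_k$. Hence every Xiong chaotic set misses $[3]$ or $[1]$, and none has full entropy everywhere.

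An additional hypothesis is therefore needed, for instance connecting words of length exactly $t$. Under it, every position is determined by the level alone, and both synchronization problems disappear. Your plan can then be carried out, organized around one synchronized fractal whose pieces are transplanted into open sets as in the paper.
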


\begin{rem}
Condition (II) says that every word in $\mathcal{L}$ can be extended to a word in $\mathcal{G}$.
\end{rem}
\section{Proof of Theorem \ref{T1}}
 In this section, we give the proof of Theorem\ref{T1}. We first construct a dynamically defined Moran fractal with full topological entropy for $\big(X_{\mathcal{L}}, \sigma\big)$ which satisfies the conditions of Theorem \ref{T1}, and then modify it to form a Xiong chaotic set while preserving the entropy.

We proceed inductively with the construction. Let $\tau$ be the constant in condition (II) of Theorem \ref{T1}, $\eta$ be the expansive constant, $\epsilon'$ be a real number with $0<\epsilon'<\frac{\eta}{16},$ and $\big\{n_{k}\big\}_{k=1} ^{+\infty}$ be a strictly increasing sequence of positive integers. For any integer $k\geq1,$ let $E_{k}=\big \{\mathbf{u}_{1} ^{(k)}, \mathbf{u}_{2} ^{(k)}, \ldots, \mathbf{u}_{c_{k}}^{(k)}\big\}$ be a maximal
$\big(n_{k}, \epsilon'\big)$-separated set with the cardinality $c_{k}$ in $X_{\mathcal{L}}$.
We set $$\mathcal{E}=\prod\limits_{k=1}^{+\infty}\big \{1, 2, \ldots, c_{k}\big\} ^{N_{k}},$$ where the sequence of positive integers $\big\{N_{k}\big\}_{k=1} ^{+\infty}$ will be defined later.

\noindent For every integer $k\geq1,$ let $t_{k}$ be the gap size in Definition \ref{t2} with $t_{k}\leq t.$ Now, we require that the sequence $\big\{n_{k}\big\}_{k=1}^{+\infty}$ satisfies $$\lim\limits_{k\rightarrow +\infty}\frac{t_{k}+\tau}{n_{k}+t_{k}+\tau}=0,$$ since the sequence $\big\{t_{k}\big\}_{k=1} ^{+\infty}$ has been determined by (W)-specification.

\noindent Moreover, we assume that the sequence $\big\{N_{k}\big\}_{k=1} ^{+\infty}$ is increasing and satisfies $$\lim\limits_{k\rightarrow +\infty}\frac{n_{k}+n_{k+1}+t_{k}+t_{k+1}+3\tau}{N_{k}}=0.$$ Let $\Join_{k}$ be an arbitrary word with length $t_{k}$ for any $k\geq1.$ Then for any $k\geq1,$ and every $\boldsymbol{i}$= $\boldsymbol{i}_{1}\boldsymbol{i}_{2}\ldots \boldsymbol{i}_{k}\ldots \in\mathcal{E}$ with $\boldsymbol{i}_{k}=\big(\boldsymbol{i}_{k}(1), \boldsymbol{i}_{k}(2), \ldots, \boldsymbol{i}_{k}(N_{k})\big)\in\big \{1, 2, \ldots, c_{k}\big\}^{N_{k}},$ we set
$$I_{k}\big(\boldsymbol{i}\big)=\big\{\mathbf{u}_{\boldsymbol{i}_{k}(1)}^{(k)}\big[1, n_{k}\big], \mathbf{u}_{\boldsymbol{i}_{k}(2)}^{(k)}\big[1, n_{k}\big],\ldots, \mathbf{u}_{\boldsymbol{i}_{k}(N_{k})}^{(k)}\big[1, n_{k}\big] \big\}.$$

By condition (II) in Theorem \ref{T1}, for any $k\geq1$ and $1\leq j\leq N_{k}$, there exist corresponding words $\gamma_{\boldsymbol{i}_{k}(j)}^{(k)}$ with $\big|\gamma_{\boldsymbol{i}_{k}(j)}^{(k)}\big|= \tau$ such that $$\mathbf{u}_{{\boldsymbol{i}_{k}(j)}}^{
(k)}\big[1, n_{k}\big] \gamma_{\boldsymbol{i}_{k}(j)}^{(k)}\in \mathcal{G}.$$
Then we can glue together the words in $I_{k}\big(\boldsymbol{i}\big)$ for each $k\in \mathbb{N}$ by using (W)-specification.

For any $k\geq1,$ set
$$y_{\boldsymbol{i}_{k}}=\mathbf{u}_{{\boldsymbol{i}_{k}(1)}}^{
(k)}\big[1, {n_{k}}\big] \gamma_{\boldsymbol{i}_{k}(1)}^{(k)}\Join_{k}\mathbf{u}_{{\boldsymbol{i}_{k}(2)}}^{
(k)}\big[1, {n_{k}}\big] \gamma_{\boldsymbol{i}_{k}(2)}^{(k)}\Join_{k}\vee\cdots\vee \Join_{k}\mathbf{u}_{{\boldsymbol{i}_{k}(N_{k})}}^{
(k)}\big[1, {n_{k}}\big] \gamma_{\boldsymbol{i}_{k}(N_{k})}^{(k)}.$$
By (W)-specification, $y_{\boldsymbol{i}_{k}}\in \mathcal{L}.$ Furthermore, by condition (II), for any $y_{\boldsymbol{i}_{k}},$ we can find corresponding words $\gamma_{k}'$ with $\big|\gamma_{k}'\big|=\tau$ such that $y_{\boldsymbol{i}_{k}} \gamma_{k}'\in \mathcal{G}.$

 Let $x_{\boldsymbol{i}_{1}}=y_{\boldsymbol{i}_{1}}\in \mathcal{L},$ and $x_{\boldsymbol{i}_{1}\boldsymbol{i}_{2}}=x_{\boldsymbol{i}_{1}}\gamma_{1}' \\Join_{2} y_{\boldsymbol{i}_{2}}\gamma_{2}'\in \mathcal{L},$ then there exists $\omega_{\boldsymbol{i}_{1}\boldsymbol{i}_{2}}$ with $\big|\omega_{\boldsymbol{i}_{1}\boldsymbol{i}_{2}}\big|=\tau$ such that $x_{\boldsymbol{i}_{1}\boldsymbol{i}_{2}}\omega_{\boldsymbol{i}_{1}\boldsymbol{i}_{2}}
 \in\mathcal{G},$ then define $$x_{\boldsymbol{i}_{1}\boldsymbol{i}_{2}\boldsymbol{i}_{3}}:=x_{\boldsymbol{i}_{1}\boldsymbol{i}_{2}}
\omega_{\boldsymbol{i}_{1}\boldsymbol{i}_{2}}\Join_{3}y_{\boldsymbol{i}_{3}} \gamma_{3}'\in \mathcal{L}.$$ Suppose that we have already constructed $x_{\boldsymbol{i}_{1}\boldsymbol{i}_{2}\ldots \boldsymbol{i}_{k}},$ then there exists $\omega_{\boldsymbol{i}_{1}\boldsymbol{i}_{2}\ldots \boldsymbol{i}_{k}}$ with length $\tau,$ such that $x_{\boldsymbol{i}_{1}\boldsymbol{i}_{2}\ldots \boldsymbol{i}_{k}}\omega_{\boldsymbol{i}_{1}\boldsymbol{i}_{2}\ldots \boldsymbol{i}_{k}}\in \mathcal{G}.$ let $$x_{\boldsymbol{i}_{1}\boldsymbol{i}_{2}\ldots \boldsymbol{i}_{k+1}}:=x_{\boldsymbol{i}_{1}\boldsymbol{i}_{2}\ldots \boldsymbol{i}_{k}}\omega_{\boldsymbol{i}_{1}\boldsymbol{i}_{2}\ldots \boldsymbol{i}_{k}}\Join_{k+1}y_{\boldsymbol{i}_{k+1}}\gamma_{k+1}'\in \mathcal{L}.$$

Hence, we get a Cauchy sequence $\big\{x_{\boldsymbol{i}_{1}\boldsymbol{i}_{2}\ldots \boldsymbol{i}_{k}}\big\}_{k=1}^{+\infty}$ (since every $x_{\boldsymbol{i}_{1}\boldsymbol{i}_{2}\ldots \boldsymbol{i}_{k}}$ is a prefix of $x_{\boldsymbol{i}_{1}\boldsymbol{i}_{2}\ldots \boldsymbol{i}_{k+1}}$ for any $k\geq1),$ by the compactness of $X_{\mathcal{L}},$ there exists $x_{\boldsymbol{i}}\in X_{\mathcal{L}}$ such that $$x_{\boldsymbol{i}}=\lim\limits_{k\rightarrow +\infty}x_{\boldsymbol{i}_{1}\boldsymbol{i}_{2}\ldots \boldsymbol{i}_{k}}.$$ Let
 \begin{equation}\label{eq1}
q_{k}:=\sum_{j=1}^{k}N_{j}\big(n_{j}+t_{j}+\tau\big)-t_{1}+\big(2k-1\big)\tau
\end{equation}
be the length of words before the first appeared $\Join_{k+1}$ for any $k\geq1.$ Define a map $H: \mathcal{E}\rightarrow X_{\mathcal{L}}$ such that $H\big(\boldsymbol{i}\big)=x_{\boldsymbol{i}}.$

\begin{lem}\label{l3}
 $H\big(\mathcal{E}\big)$ is a dynamically defined Moran fractal with full topological entropy.
\end{lem}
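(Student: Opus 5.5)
The plan is to realize $H(\mathcal{E})$ as a dynamically defined Moran fractal in the sense of Definition \ref{d4}. We index it by the finer structure in which level $m$ corresponds to the first $m$ individual symbols of $\boldsymbol{i}$. Concretely, a level-$m$ word fixes $\boldsymbol{i}_{1},\ldots,\boldsymbol{i}_{k-1}$ and the first $j$ entries of $\boldsymbol{i}_{k}$, where $m=N_{1}+\cdots+N_{k-1}+j$. The model $\mathcal{E}$ is then of the form $Q=\prod\{1,\ldots,c_{k}\}$, with each $c_{k}$ repeated $N_{k}$ times, as required by Proposition \ref{p2}. For the basic set $\Delta$ attached to such a prefix, take the closure of the set of $x_{\boldsymbol{i}'}$ over all $\boldsymbol{i}'\in\mathcal{E}$ sharing that prefix. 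Then (C1) is immediate. For (C2), note that all points in one basic set share the common prefix word up to the end of the $j$-th block $\mathbf{u}^{(k)}[1,n_{k}]\gamma$. This prefix has length $l_{m}:=q_{k-1}+t_{k}+j(n_{k}+t_{k}+\tau)$, up to the bookkeeping constants in \eqref{eq1}, and it tends to infinity. So the diameter is at most $2^{-l_{m}}$.

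For (C3), I would take $l_{m}$ to be the position at which the $j$-th block of level $k$ ends. Suppose two distinct level-$m$ prefixes first differ in some block $(k',j')$. Then the corresponding $n_{k'}$-words $\mathbf{u}^{(k')}_{a}[1,n_{k'}]$ and $\mathbf{u}^{(k')}_{b}[1,n_{k'}]$ come from distinct points of the $(n_{k'},\epsilon')$-separated set $E_{k'}$, so some shift $\sigma^{s}$ with $s<n_{k'}$ puts them at distance $>\epsilon'$. Since the metric is $2^{-\min}$, this yields a disagreement in a coordinate within the first $n_{k'}$ symbols of the block. Shifting by the block's starting position, which is at most $l_{m}$, gives $d_{l_{m}}\ge \epsilon'=:\delta$ between any two points of the two basic sets. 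The separation persists after taking closures because the lower bound is uniform.

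The same computation gives $\textup{diam}_{d_{l_{m}}}(\Delta)\le 2^{-(\text{tail length})}$, but Bowen diameters at time $l_{m}$ do not obviously go to $0$, since the last shift sees only the final few symbols. To fix this, I would apply Lemma \ref{l1}, or equivalently shift the index: use $l_{m}-L$ for a fixed large $L$. Both the diameter-to-zero condition and the separation (with $\epsilon'/2^{L}$ or via Lemma \ref{l1}) then hold, and $l_{m+1}/l_{m}\to 1$ is unaffected. The ratio condition itself holds because $l_{m+1}-l_{m}\le n_{k+1}+t_{k+1}+3\tau+\cdots$, while $l_{m}\ge N_{k-1}$ grows much faster by the growth assumption on $N_{k}$.

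Proposition \ref{p2} then gives $h_{top}(\sigma,H(\mathcal{E})\cap U)=\liminf_{m}\log\#Q_{m}/l_{m}$. We have $\log\#Q_{m}=\sum_{k'<k}N_{k'}\log c_{k'}+j\log c_{k}$, and $l_{m}\approx\sum_{k'<k}N_{k'}(n_{k'}+t_{k'}+\tau)+j(n_{k}+t_{k}+\tau)$. Because $(t_{k}+\tau)/(n_{k}+t_{k}+\tau)\to0$ and $\frac1{n_{k}}\log c_{k}\to h_{top}(\sigma)$ by Proposition \ref{p1} (with $\epsilon'<\eta/4$), a Stolz–Cesàro style averaging shows that the $\liminf$ equals $h_{top}(\sigma)$; the terms of lower order are controlled by the $N_{k}$ growth condition. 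I expect the main obstacle to be the careful verification of (C3) together with the Bowen-diameter condition at the same scale $l_{m}$, including the positional bookkeeping coming from the varying connecting words. Note in particular that the connecting words $\Join_{k}$ must be allowed to depend on the glued words for the (W)-specification to apply.
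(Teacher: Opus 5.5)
\textbf{Gap: the Bowen-diameter hypothesis.} Your overall plan is reasonable: a symbol-level Moran structure, closures of cylinder images, Proposition~\ref{p2}, then averaging. However, the step that secures the Bowen-diameter hypothesis of Proposition~\ref{p2} fails, and no choice of $l_m$ in your structure repairs it.

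Let $e_m$ be the end of the $j$-th block at level $m$. Points of one basic set agree on $[1,e_m]$ but may differ at $e_m+1$, because the next connector and block are still free. So with $L$ fixed and $l_m=e_m-L$ you only get $\sup\textup{diam}_{d_{l_m}}(\Delta)\le 2^{-(L+2)}$. This is small but does not tend to $0$.

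Letting $L=L_m\to\infty$ instead destroys (C3). Two distinct level-$m$ cylinders may first differ at the last symbol of the $\mathbf{u}$-part of the $j$-th block, only $\tau$ symbols before $e_m$; this already happens in the full shift. Their $d_{e_m-L_m}$-distance is then about $2^{-(L_m-\tau)}\to0$. More generally, a uniform $\delta$ in (C3) forces $e_m-l_m\le\tau+\log_2(1/\delta)+O(1)$, and then the $d_{l_m}$-diameter stays bounded below. Lemma~\ref{l1} does not help, because it needs $N(\epsilon)\to\infty$ extra steps of agreement.

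You cannot borrow the paper's fix either. It uses closed Bowen balls with radii $\epsilon_k\downarrow0$, but its nesting step asserts $d_{q_k}(x_{\boldsymbol{i}_{1}\ldots\boldsymbol{i}_{k}},x_{\boldsymbol{i}_{1}\ldots\boldsymbol{i}_{k+1}})=2^{-(q_k+1)}$. That is the value of $d$, not of $d_{q_k}$.

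\textbf{Two ways to repair it.}
\begin{enumerate}
\item Prove only the lower bound. The upper bound $h_{top}(\sigma,H(\mathcal{E})\cap U)\le h_{top}(\sigma)$ is trivial, and the lower-bound half of the argument uses only (C3) and $l_{m+1}/l_m\to1$. Push the uniform product measure on $\mathcal{E}$ forward by $H$. A ball $B_n(x,\delta/2)$ with $l_m\le n<l_{m+1}$ meets at most one level-$m$ basic set, so its mass is at most $1/\#\mathcal{E}_m$. The set $U$ contains the image of a cylinder, so it has positive mass. The entropy distribution principle then gives $h_{top}(\sigma,H(\mathcal{E})\cap U)\ge\liminf_m\log\#\mathcal{E}_m/l_{m+1}$, and your averaging finishes the proof.
\item Keep Proposition~\ref{p2} as a black box, but use blocks $\mathbf{u}[1,n_k+b_k]$ with $b_k\to\infty$ and $b_k/n_k\to0$. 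Take $l_m$ to be the end of the $\mathbf{u}[1,n_k]$-part of the $j$-th block. Then both hypotheses hold.
\end{enumerate}

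\textbf{Two further steps need repair.} First, $(n_k,\epsilon')$-separation does not make the truncations $\mathbf{u}[1,n_k]$ distinct. The inequality $d(\sigma^s a,\sigma^s b)>\epsilon'$ with $s<n_k$ only forces a disagreement at a coordinate at most $s+\log_2(1/\epsilon')$, which may lie beyond $n_k$. So distinct indices can give identical blocks, and (C3) fails for the model $\mathcal{E}$; this defect is inherited from the paper's construction. Index by the distinct truncations instead. This loses at most a factor $P^{\lceil\log_2(1/\epsilon')\rceil}$, which is independent of $k$.

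Second, the obstacle you flagged is real. Connectors from (W)-specification may depend on all the glued words, including later ones, and may have any length up to $t$. Hence neither the common-prefix property behind your (C2) nor the alignment of the first differing block behind your (C3) is automatic. To fix this, glue one word at a time, returning to $\mathcal{G}$ via (II) after each step. At each node, keep by pigeonhole a $1/(t+1)$-fraction of the candidate words that share one connector length. This costs about $\log(t+1)$ per block, which is negligible against $n_k$.
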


\begin{proof}
Let~$\big\{\epsilon_{k}\big\}_{k\ge1}$ be a strictly decreasing sequence of positive numbers. Pick up a positive integer $h$ such that for any $k\geq2$, one has
$$\epsilon_{1}=\sum_{j=h}^{+\infty}
\frac{1}{2^{q_{j}+1}}<\frac{1}{16}\epsilon'\quad \text{and}\quad\epsilon_{k}=\epsilon_{1}-\sum_{j=1}^{k-1}
\frac{1}{2^{q_{j}+1}}>0.$$

\noindent Fix $i_{1}i_{2}\ldots i_{n}\in \mathcal{E}_{n}$ with $n\geq N_{1}.$ If there exist some $k\geq1$ such that $i_{1}i_{2}\ldots i_{n}=\boldsymbol{i}_{1}\boldsymbol{i}_{2}\ldots \boldsymbol{i}_{k},$ then we set
$$l_{n}=q_{k}\ \textup{and}\ \Delta_{i_{1}i_{2}\ldots i_{n}}=\overline{B_{q_{k}}\big(x_{\boldsymbol{i}_{1}\boldsymbol{i}_{2}\ldots \boldsymbol{i}_{k}}, \epsilon_{k}\big)}.$$

\noindent Otherwise, that is, there exist some $k\geq1$ such that $\boldsymbol{i}_{1}\boldsymbol{i}_{2}\ldots \boldsymbol{i}_{k}$ is a prefix of $i_{1}i_{2}\ldots i_{n}$ and at the meanwhile $i_{1}i_{2}\ldots i_{n}$ is a prefix of $\boldsymbol{i}_{1}\boldsymbol{i}_{2}\ldots \boldsymbol{i}_{k+1},$ then set
$$l_{n}=q_{k}+\big(n-\sum_{j=1}^{k}N_{j}\big)
\cdot\big(n_{k+1}+t_{k+1}+\tau\big)$$
and
$$
\Delta_{i_{1}i_{2}\ldots i_{n}}=\bigcup_{\mbox{\tiny$\begin{array}{c}
\boldsymbol{i}_{1}\boldsymbol{i}_{2}\ldots \boldsymbol{i}_{k}\textup{is a prefix of}\ i_{1}i_{2}\ldots i_{n} \textup{and}\\
i_{1}i_{2}\ldots i_{n}\textup{is a prefix of}\ \boldsymbol{i}_{1}\boldsymbol{i}_{2}\ldots \boldsymbol{i}_{k+1}
\end{array}$}}\overline{B_{q_{k+1}}\big(x_{\boldsymbol{i}_{1}\boldsymbol{i}_{2}\ldots \boldsymbol{i}_{k+1}}, \epsilon_{k+1}\big)}.
$$
We claim that $\bigcap\limits_{n=1}^{+\infty}\bigcup\limits_{i_{1}i_{2}\ldots i_{n}\in \mathcal{E}_{n}}\Delta_{i_{1}i_{2}\ldots i_{n}}$ is a dynamically defined Moran fractal.
The condintion (C2) in Definition \ref{d4} is obviously satisfied since $\epsilon_{k}$ is decreasing to $0$ as $k\rightarrow\infty$. Note that
$$d_{q_{k}}(x_{\boldsymbol{i}_{1}\boldsymbol{i}_{2}\ldots \boldsymbol{i}_{k}}, x_{\boldsymbol{i}_{1}\boldsymbol{i}_{2}\ldots \boldsymbol{i}_{k+1}})=
\frac{1}{2^{q_{k}+1}}
=\epsilon_{k}-\epsilon_{k+1},$$

\noindent then

$$\overline{B_{q_{k+1}}\big(x_{\boldsymbol{i}_{1}\boldsymbol{i}_{2}\ldots \boldsymbol{i}_{k+1}}, \epsilon_{k+1}\big)}\subset\overline{B_{q_{k}}\big(x_{\boldsymbol{i}_{1}\boldsymbol{i}_{2}\ldots \boldsymbol{i}_{k}}, \epsilon_{k}\big)}.$$

\noindent Thus, (C1) is satisfied.
~\\
\noindent Since the sequence of closed balls $\big\{\overline{B_{q_{k}}\big(x_{\boldsymbol{i}_{1}\boldsymbol{i}_{2}\ldots \boldsymbol{i}_{k}}, \epsilon_{k}\big)}\big\}_{k=1}^{+\infty}$ is strictly decreasing to $x_{\boldsymbol{i}},$ it is obvious that $H(\mathcal{E})$ is exactly the set $$\bigcap\limits_{n=1}^{+\infty}\bigcup\limits_{{i_{1}i_{2}\ldots i_{n}\in\mathcal{E}_{n}}} \Delta_{i_{1}i_{2}\ldots i_{n}}.$$ Now, we show that the condition (C3) holds. For every pair $i_{1}i_{2}\ldots i_{n},~ j_{1}j_{2}\ldots j_{n}$ of distinct words in $\in \mathcal{E}_{n},$ assume that $t$ is the least number such that $i_{t}\neq j_{t}$ with $1\leq t\leq n.$ Then either $k=0$ such that $1\leq t\leq N_{1}$ or there exists $k\geq1$ such that $N_{1}+N_{1}+\cdots +N_{k}<t\leq N_{1}+N_{1}+\cdots +N_{k+1}.$ In brief, there exist some $k\geq0$ and $1\leq m\leq N_{k+1}$ such that $\mathbf{u}_{\boldsymbol{i}_{k+1}(m)}^{(k+1)}\neq \mathbf{u}_{\boldsymbol{j}_{k+1}(m)}^{(k+1)}.$ Then according to the construction of $H\big(\mathcal{E}\big),$ there exist some integer $1\leq b\leq q_{k+1}$ such that

$$\sigma^{b}\big(x_{\boldsymbol{i}}\big)\big[1, n_{k+1}\big]=\mathbf{u}_{\boldsymbol{i}_{k+1}(m)}^{(k+1)}\big[1,n_{k+1}\big]\ \textup
{and}\ \sigma^{b}\big(x_{\boldsymbol{j}}\big)\big[1, n_{k+1}\big]=\mathbf{u}_{\boldsymbol{j}_{k+1}(m)}^{(k+1)}\big[1,n_{k+1}\big].$$
Thus, we have
$$\begin{aligned}
d_{l_{n}}\big(x_{\boldsymbol{i}}, x_{\boldsymbol{j}}\big)&\geq d_{n_{k+1}}\big(z_{\boldsymbol{i}_{k+1}(m)}^{(k+1)}, z_{\boldsymbol{j}_{k+1}(m)}^{(k+1)}\big)-d_{n_{k+1}}\big(\sigma^{b}(x_{\boldsymbol{i}}),
z_{\boldsymbol{i}_{k+1}(m)}^{(k+1)}\big)-d_{n_{k+1}}\big(\sigma^{b}(x_{\boldsymbol{j}}),
z_{\boldsymbol{j}_{k+1}(m)}^{(k+1)}\big)\\
&\geq \epsilon'
\end{aligned}
$$
and
$$\begin{aligned}
d_{l_{n}}\big(\Delta_{i_{1}i_{2}\ldots i_{n}}, \Delta_{j_{1}j_{2}\ldots j_{n}}\big)\geq
d_{l_{n}}\big(x_{\boldsymbol{i}}, x_{\boldsymbol{j}}\big)-2\epsilon_{k+1}\geq \epsilon'-2\epsilon_{k+1}>\frac{3}{4}\epsilon'.
\end{aligned}
$$
Moreover, for any $n\geq1,$ $l_{n+1}$ is at most $n_{k+1}+t_{k+1}+\tau$ or $n_{k}+t_{k}+2\tau$ larger than $l_{n}.$ We have
$$\begin{aligned}
\frac{n_{k+1}+t_{k+1}+n_{k}+t_{k}+3\tau}{l_{n}}+1<\frac{n_{k+1}+t_{k+1}+n_{k}+t_{k}+3\tau}{N_{k}}+1.
\end{aligned}
$$
Hence,
\begin{equation}\label{lnln1}
\lim\limits_{n\rightarrow+\infty}\frac{l_{n+1}}{l_{n}}=1.
\end{equation}
Since the set $H\big(\mathcal{E}\big)$ can be viewed as a dynamically defined Moran fractal for the compact dynamical system $\big(X_{\mathcal{L}}, \sigma\big),$ by Proposition \ref{p2}, for any open set $U$ which has non-empty intersection with $H\big(\mathcal{E}\big),$ we have $$h_{top}\big(\sigma, H(\mathcal{E})\cap U\big)=\liminf\limits_{n\rightarrow+\infty}
\frac{\log{\#(\mathcal{E}_{n})}}{l_{n}}.$$
For any $n\geq1,$ there exist some intege $r\geq0$ such that $N_{1}+N_{2}+\cdots +N_{r}\leq n\leq N_{1}+N_{2}+\cdots +N_{r+1}.$ It can be calculated that
$$\begin{aligned}
&\quad\ \frac{\log{\#\big(\mathcal{E}_{n}\big)}}{l_{n}}>\frac{\log{\#\big(\mathcal{E}_{N_{1}+N_{2}+\cdots +N_{r}}\big)}}{l_{N_{1}+N_{2}+\cdots +N_{r+1}}}\\
&=\frac{\log{\#\big(\mathcal{E}_{N_{1}+N_{2}+\cdots +N_{r}}\big)}}{l_{r-2}}\cdot\frac{l_{r-2}}{l_{N_{1}+N_{2}+\cdots +N_{r+1}}} \\
&=\frac{N_{1}\log{c_{1}}+N_{2}\log{c_{2}}+\cdots +N_{r}\log{c_{r}}}{l_{_{r-2}}}\cdot\frac{l_{_{r-2}}}{l_{N_{1}+N_{2}+\cdots +N_{r+1}}}.
\end{aligned}
$$
By (\ref{eq1}), we have

$$l_{_{r-2}}\leq q_{_{r-1}}\leq \sum\limits_{j=1}^{r}N_{j}(n_{j}+t_{j}+\tau)+(2r-1)\tau.$$
Then
$$\liminf\limits_{n\rightarrow+\infty}\frac{\log\#\big(\mathcal{E}_{n}\big)}{l_{n}}
\geq\lim\limits_{r\rightarrow+\infty}\frac{N_{1}\log{c_{1}}+N_{2}\log{c_{2}}+\cdots +N_{r}\log{c_{r}}}{\sum\limits_{j=1}^{r}N_{j}\big(n_{j}+t_{j}+\tau\big)+\big(2r-1\big)\tau}
=\lim\limits_{r\rightarrow+\infty}\frac{\log{c_{r}}}{n_{r}+t_{r}+\tau}.$$
Recall that $\lim\limits_{k\rightarrow +\infty}\frac{t_{k}+\tau}{n_{k}+t_{k}+\tau}=0$, by Proposition \ref{p2}, one has
$$h_{top}\big(\sigma, H\big(\mathcal{E}\big)\cap U\big)=\liminf\limits_{n\rightarrow+\infty}\frac{\log{\#\big(\mathcal{E}_{n}\big)}}{l_{n}}
\geq\lim\limits_{k\rightarrow+\infty}
\frac{\log{c_{k}}}{n_{k}}=h_{top}\big(\sigma\big),$$
where $h_{top}\big(\sigma\big)$ is the topological entropy of $\big(X_{\mathcal{L}}, \sigma\big)$.
\end{proof}

Next, we will insert the chaotic parts into the dynamically defined Moran fractal $H\big(\mathcal{E}\big)$ and maintian the entropy.

\begin{lem}\label{L4}
Let $\big(X_{\mathcal{L}}, \sigma\big)$ be the shift dynamical system which satisfies the conditions of Theorem \ref{T1}, then there exists a Xiong chaotic set with full topological entropy.
\end{lem}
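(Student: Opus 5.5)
Our plan is to follow the Lau--Shu strategy: we modify the Moran fractal $H(\mathcal{E})$ of Lemma \ref{l3} by inserting, at a sparse set of times, ``chaotic blocks'' that copy initial segments of the point itself into prescribed targets. First we fix a countable dense family of targets. Since $X_{\mathcal{L}}$ is a compact shift, for each $m$ we list the finitely many pairs $(v,w)$ with $v\in\mathcal{L}_{m}$ a prefix pattern and $w\in\mathcal{L}_{m}$ a target word. A continuous $F:A\to X_{\mathcal{L}}$ on $A\subset C$ is then approximated, at level $m$, by a finite map $\phi_m:\mathcal{L}_m\to\mathcal{L}_m$ with the following property: for $x\in A$, $\phi_m(x[1,m])=F(x)[1,r_m]$ with $r_m\to\infty$. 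We enumerate all finite maps $\phi:\mathcal{L}_m\to\mathcal{L}_m$, over all $m$, in a sequence $\{\phi^{(s)}\}_{s\geq1}$ in which each map appears infinitely often.

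Next we modify the construction. We choose a very sparse sequence of stages $k_1<k_2<\cdots$. At stage $k_s$, between $x_{\boldsymbol{i}_1\ldots\boldsymbol{i}_{k_s}}\omega_{\boldsymbol{i}_1\ldots\boldsymbol{i}_{k_s}}$ and the next connector $\Join_{k_s+1}$, we insert the word $\phi^{(s)}(x_{\boldsymbol{i}}[1,m_s])$, where $m_s$ is the level of $\phi^{(s)}$. This word is legitimate. Because $m_s\ll q_{k_s}$, the prefix $x_{\boldsymbol{i}}[1,m_s]$ is already determined at that stage. Admissibility is maintained by the same device as before: by condition (II) we extend the current prefix by a $\tau$-word into $\mathcal{G}$; the inserted word $w$, extended by a $\tau$-word, also lies in $\mathcal{G}$ (again by (II), since $w\in\mathcal{L}=\bigcup_N\mathcal{G}(N)$); and (W)-specification glues the pieces with gaps of length at most $t$. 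We record the position $p_s$ at which the inserted word begins. This position is independent of $\boldsymbol{i}$ if we pad each inserted word to a fixed length $m_s+\tau$ and use connectors of fixed length $t_k$, as in the construction above. Call the new coding map $\tilde H$ and let $C=\tilde H(\mathcal{E})$.

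For Xiong chaos, let $A\subset C$ and let $F$ be continuous. For each $r$ we pick $m$ by uniform continuity of $F$ on $\overline{A}$, so that $x[1,m]=y[1,m]$ implies $F(x)[1,r]=F(y)[1,r]$. We define $\phi$ on the prefixes realized in $A$ and extend it arbitrarily elsewhere. This $\phi$ occurs as $\phi^{(s)}$ for infinitely many $s$, and for those $s$ we get $\sigma^{p_s-1}(x)[1,r]=F(x)[1,r]$ for all $x\in A$ simultaneously. A diagonal choice over $r$ gives a strictly increasing sequence $p_k$ with $\sigma^{p_k}(x)\to F(x)$ for all $x\in A$. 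For entropy, the inserted blocks have total length $\sum_{s\le S}(m_s+t+2\tau)$, and we take $k_s$ growing fast enough that this is $o(N_{k_s})$. The separation (C3) and the ratio $l_{n+1}/l_n\to1$ from Lemma \ref{l3} then survive, with $l_n$ shifted by these negligible lengths, and Proposition \ref{p2} again yields $h_{top}(\sigma,C\cap U)\ge h_{top}(\sigma)$ for every open $U$ meeting $C$. We also check that $\tilde H$ is injective and that the Moran structure persists; this works because the inserted blocks are functions of earlier coordinates only.

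The main obstacle is the combination of (a) making $\phi^{(s)}$ depend only on an already-fixed prefix while still capturing an arbitrary continuous $F$ on an arbitrary subset, and (b) keeping every inserted position common to all points. Part (a) forces the uniform continuity argument, which requires care because $A$ need not be closed. Since $F$ is only continuous on $A$, we handle this by working along a sequence of $r$ and letting the required $m$ depend on $r$. For part (a) we also need that, for each $m$, the prefixes of length $m$ in $C$ occurring in $A$ determine $F(x)[1,r]$; we would first try to refine the coding so that distinct points of $C$ differ within their first $m$ symbols for large $m$ in a controlled way, and fall back on a finite pigeonhole over prefixes if needed. Part (b) is resolved by the fixed-length padding mentioned above.
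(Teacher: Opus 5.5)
Your construction of $C$ is a reasonable variant of the paper's. You insert one block $\phi^{(s)}(x[1,m_s])$ per sparse stage, with each finite map occurring infinitely often; the paper instead inserts, at stage $m_k$, the concatenation $\chi_k$ of the images of all $s_k$ maps. However, the Xiong-chaos argument has a genuine gap, exactly at the step you flag as the main obstacle.

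You need, for each $r$, one $m$ such that $x[1,m]=y[1,m]$ implies $F(x)[1,r]=F(y)[1,r]$ for all $x,y\in A$. That is uniform continuity of $F$ on $A$, and it fails in general: $A$ is arbitrary and $F$ is only continuous on $A$, so it need not extend to $\overline{A}$. For a counterexample, take distinct $a_n\in C$ converging to a point $a\notin\{a_n\}$. Let $A=\{a_n\}$, and let $F(a_n)$ alternate between two points with different first symbols. Then $F$ is continuous on the discrete set $A$. Yet for every $m$ there is $n$ with $a_n[1,m]=a_{n+1}[1,m]$ and $F(a_n)[1,1]\neq F(a_{n+1})[1,1]$. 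So no $\phi:\mathcal{L}_m\to\mathcal{L}_m$ with $\phi(x[1,m])=F(x)[1,r]$ on $A$ exists, even for $r=1$.

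Your proposed remedies do not help. Letting $m$ depend on $r$ is what you already had; the problem is that $m$ must depend on $x$. Since the metric on $X_{\mathcal{L}}$ is the prefix metric, convergent points share long prefixes whatever coding of $C$ you choose. So neither refining the coding nor a pigeonhole over prefixes can make $x[1,m]$ determine $F(x)[1,r]$ on $A$.

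The missing idea, which the paper's $\phi_u(x)$ and $\rho_u(x)$ encode, is to let the precision depend on the cylinder rather than be uniform.
\begin{itemize}
\item For $w\in\mathcal{L}_m$ with $A\cap[w]\neq\emptyset$, let $j_m(w)\le m$ be the largest $j$ such that $F(A\cap[w])$ lies in a single cylinder $[v_w]$ with $|v_w|=j$.
\item Let $\phi_m(w)$ be any extension of $v_w$ to a word in $\mathcal{L}_m$, and define $\phi_m$ arbitrarily on the other prefixes.
\item This $\phi_m$ is among your enumerated maps, so you can pick stages $s_m$ with $\phi^{(s_m)}=\phi_m$ and $s_m$ increasing.
\item At the common position $p_{s_m}$, every $x\in A$ satisfies $\sigma^{p_{s_m}-1}(x)[1,j_m(x[1,m])]=F(x)[1,j_m(x[1,m])]$.
\item Continuity of $F$ at the single point $x$ gives $j_m(x[1,m])\to\infty$, hence $\sigma^{p_{s_m}-1}(x)\to F(x)$ for each $x\in A$. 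No uniformity is needed.
\end{itemize}
For this to work, the inserted block must be computed from the prefix of the modified point $\tilde H(\boldsymbol{i})$, not of $x_{\boldsymbol{i}}$.

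Separately, padding the inserted word does not by itself make $p_s$ independent of $\boldsymbol{i}$. (W)-specification only provides connecting words of length at most $t$, and these depend on the words being glued. The paper makes the same simplification by giving the stage-$k$ connecting words the fixed length $t_k$, so this is not where you diverge from it.
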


\begin{proof}
We first fix some notations. Let $$\mathcal{E}^{(k)}=\prod\limits_{j=1}^{k}\big\{1, 2, \ldots, c_{j}\big\}^{N_{j}}.$$ We denote by $\varphi_{1}^{(k)}, \varphi_{2}^{(k)}, \ldots, \varphi_{c_{k}}^{(k)}$ (with $s_{k}=c_{k}^{\#(\mathcal{E}^{(k)})}$) the maps from $\mathcal{E}^{(k)}$ to $E_{k}=\big\{\mathbf{u}_{1}^{(k)}, \mathbf{u}_{2}^{(k)}, \ldots, \mathbf{u}_{c_{k}}^{(k)}\big\}.$ Fix $\boldsymbol{i}$= $\boldsymbol{i}_{1}\boldsymbol{i}_{2}\ldots \boldsymbol{i}_{k}\ldots \in \mathcal{E}$ with $\boldsymbol{i}_{k}=\big(\boldsymbol{i}_{k}(1), \boldsymbol{i}_{k}(2), \ldots, \boldsymbol{i}_{k}(N_{k})\big)\in \big\{1, 2, \ldots, c_{k}\big\}^{N_{k}}.$ Set

$$J_{1}\big(\boldsymbol{i}\big):=\Big\{\varphi_{1}^{(1)}\big(\boldsymbol{i}_{1}\big)\big[1, n_{1}\big], \varphi_{2}^{(1)}\big(\boldsymbol{i}_{1}\big)\big[1, n_{1}\big], \ldots, \varphi_{s_{1}}^{(1)}\big(\boldsymbol{i}_{1}\big)\big[1, n_{1}\big]\Big\}$$
and
$$J_{k}\big(\boldsymbol{i}\big):=\Big\{\varphi_{1}^{(k)}\big(\boldsymbol{i}_{1}\boldsymbol{i}_{2}\ldots \boldsymbol{i}_{k}\big)\big[1, n_{k}\big], \varphi_{2}^{(k)}\big(\boldsymbol{i}_{1}\boldsymbol{i}_{2}\ldots \boldsymbol{i}_{k}\big)\big[1, n_{k}\big], \ldots, \varphi_{s_{k}}^{(k)}\big(\boldsymbol{i}_{1}\boldsymbol{i}_{2}\ldots \boldsymbol{i}_{k}\big)\big[1, n_{k}\big]\Big\}$$
for any $k\geq2.$

\noindent By condition (II) in Theorem \ref{T1}, for any $k\geq1$ and $1\leq j\leq s_{k},$ there exist corresponding words $\xi_{j}^{(k)}$ with $\big|\xi_{j}^{(k)}\big|=\tau$ such that $\varphi_{s_{k}}^{(k)}\big(\boldsymbol{i}_{1}\boldsymbol{i}_{2}\ldots \boldsymbol{i}_{k}\big)\big[1, n_{k}\big]\xi_{j}^{(k)}\in \mathcal{G}.$

\vspace{0.5em}

We inductively modify the set $H\big(\mathcal{E}\big)$ as follows. Let $x=H\big(\boldsymbol{i}\big),$ write $x=x_{1}x_{2}\ldots x_{k}\ldots$ with $\big|x_{1}x_{2}\ldots x_{k}\big|=q_{m_{k}},$ where the increasing sequence of positive integers $\big\{m_{k}\big\}_{k=1}^{+\infty}$ will be specified later. We first glue together the words in $J_{k}\big(\boldsymbol{i}\big)$ for each $k\in \mathbb{N}$ by using (W)-specification. Set
$$\begin{aligned}
\chi_{k}=&\varphi_{1}^{(k)}\big(\boldsymbol{i}_{1}\boldsymbol{i}_{2}\ldots \boldsymbol{i}_{k}\big)\big[1, n_{k}\big]\xi_{1}^{(k)}\Join_{m_{k}}\varphi_{2}^{(k)}\big(\boldsymbol{i}_{1}\boldsymbol{i}_{2}\ldots \boldsymbol{i}_{k}\big)\big[1, n_{k}\big]\xi_{2}^{(k)}\Join_{m_{k}}\vee\cdots\\
&\vee\Join_{m_{k}}\varphi_{s_{k}}^{(k)}\big(\boldsymbol{i}_{1}\boldsymbol{i}_{2}\ldots \boldsymbol{i}_{k}\big)\big[1, n_{k}\big]\xi_{s_{k}}^{(k)}
\end{aligned}$$
for any $k\geq1.$ By (W)-specification, we have $\chi_{k}\in\mathcal{L}.$ Then, for any $\chi_{k}$, there exists $\lambda_{k}$ with length $\tau$ such that $\chi_{k}\lambda_{k}\in\mathcal{G}$, and we denote it by $\chi_{k}'$.
Let $\zeta_{k}\big(x\big)$ be the word with $\big|\zeta_{k}\big(x\big)\big|=\tau$ such that
$x_{k}\zeta_{k}\big(x\big)\in\mathcal{G}$ for any $k\ge1,$ and we denote it by $x_{k}'$. Then
$$y^{(1)}:=x_{1}'\Join_{m_{1}}
\chi_{1}'\in\mathcal{L}.$$
By (C2), there exists $\zeta^{(1)}$ and with length $\tau$ such that
$$y^{(1)}\zeta^{(1)}\Join_{m_2}x_{2}'
\in\mathcal{L}.$$
Furthermore, there exists $\omega_2$ with $\big|\omega_2\big|=\tau$ such that
$$y^{(1)}\zeta^{(1)}\Join_{m_2}x_{2}'\omega_2
\in\mathcal{G}$$
then define
$$y^{(2)}:=y^{(1)}\zeta^{(1)}\Join_{m_2}x_{2}'
\omega_2\Join_{m_2}
\chi_{2}'\in\mathcal{L}.
$$
Suppose that we have already constructed $y^{(k)}\in\mathcal{L}$, then there exists $\zeta^{(k)}$ and $\omega_{k+1}$ with length $\tau$ such that
$$y^{(k)}\zeta^{(k)}\Join_{m_{k+1}}x_{k+1}'\omega_{k+1}
\in\mathcal{G},$$
then define
$$y^{(k+1)}:=y^{(k)}\zeta^{(k)}\Join_{m_{k+1}}
x_{k+1}'\omega_{k+1}\Join_{m_{k+1}}\chi_{k+1}'\in\mathcal{L}.$$

Hence, by the compactness of $X_{\mathcal{L}},$ there exists $y\in X_{\mathcal{L}}$ such that
$$y=\lim\limits_{k\rightarrow\infty}y^{(k)}.$$
We can roughly illustrate this process with the following figure.
\begin{figure}[ptbh]
\centering\includegraphics[width=0.4\textwidth]{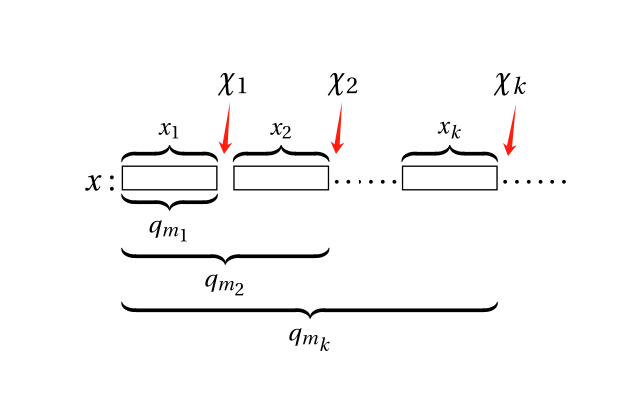} \vspace{-0.3cm}
\label{structure}
\end{figure}
$$\Downarrow$$
\begin{equation}
y: \overbrace{{\begin{tikzpicture}
\draw (0,0) rectangle (1,1/4);
\end{tikzpicture}}}^{x_{1}}\ \;
\underbrace{{\begin{tikzpicture}
\draw (0,0) rectangle (1/4,1/4);
\end{tikzpicture}}}_{\mbox{\tiny$\begin{array}{c}
\textup{connecting}\\
\textup{words}
\end{array}$}}
\overbrace{{\begin{tikzpicture}
\draw (0,0) rectangle (1/2,1/4);
\end{tikzpicture}}}^{\chi_{1}}
\underbrace{{\begin{tikzpicture}
\draw (0,0) rectangle (1/4,1/4);
\end{tikzpicture}}}_{\mbox{\tiny$\begin{array}{c}
\textup{connecting}\\
\textup{words}
\end{array}$}}
\overbrace{{\begin{tikzpicture}
\draw (0,0) rectangle (1,1/4);
\end{tikzpicture}}}^{x_{2}}
\underbrace{{\begin{tikzpicture}
\draw (0,0) rectangle (1/4,1/4);
\end{tikzpicture}}}_{\mbox{\tiny$\begin{array}{c}
\textup{connecting}\\
\textup{words}
\end{array}$}}
\cdots
\underbrace{{\begin{tikzpicture}
\draw (0,0) rectangle (1/4,1/4);
\end{tikzpicture}}}_{\mbox{\tiny$\begin{array}{c}
\textup{connecting}\\
\textup{words}
\end{array}$}}
\overbrace{{\begin{tikzpicture}
\draw (0,0) rectangle (1/2,1/4);
\end{tikzpicture}}}^{\chi_{k-1}}
\underbrace{{\begin{tikzpicture}
\draw (0,0) rectangle (1/4,1/4);
\end{tikzpicture}}}_{\mbox{\tiny$\begin{array}{c}
\textup{connecting}\\
\textup{words}
\end{array}$}}
\overbrace{{\begin{tikzpicture}
\draw (0,0) rectangle (1,1/4);
\end{tikzpicture}}}^{x_{k}}
\cdots
\nonumber
\end{equation}

Set
$$U_{1}\big(\boldsymbol{i}\big):=\mathbf{u}_{\boldsymbol{i}_{1}(1)}^{(1)}[1,n_{1}]\gamma_{\boldsymbol{i}_{1}(1)}^{(1)}
\Join_{1}\mathbf{u}_{\boldsymbol{i}_{1}(2)}^{(1)}\big[1,n_{1}\big]\gamma_{\boldsymbol{i}_{1}(2)}^{(1)}
\Join_{1}\vee\cdots\vee  \Join \mathbf{u}_{\boldsymbol{i}_{1}(N_{1})}^{(1)}\big[1,n_{1}\big]
\gamma_{\boldsymbol{i}_{1}(N_{1})}^{(1)}\gamma_{1}'$$
and
$$
U_{k}\big(\boldsymbol{i}\big):=\Join_{k}\mathbf{u}_{\boldsymbol{i}_{k}(1)}^{(k)}\big[1,n_{k}\big]\gamma_{\boldsymbol{i}_{k}(1)}^{(k)}
\Join_{k}\mathbf{u}_{\boldsymbol{i}_{k}(2)}^{(k)}\big[1,n_{k}\big]\gamma_{\boldsymbol{i}_{k}(2)}^{(k)}
\Join_{k}\vee\cdots\vee \Join_{k}
\mathbf{u}_{\boldsymbol{i}_{k}(N_{k})}^{(1)}\big[1,n_{k}\big]\gamma_{\boldsymbol{i}_{k}(N_{k})}^{(1)}
\gamma_{k}'\omega_{\boldsymbol{i}_{1}\boldsymbol{i}_{2}\ldots \boldsymbol{i}_{k}}
$$
for $k\geq2.$

After inserting some words into primitive $U_{k}\big(\boldsymbol{i}\big)$ in the manner described above, we obtain a new string of words and denote them by $U_{k}'\big(\boldsymbol{i}\big)$ for any $\boldsymbol{i}\in \mathcal{E}.$ Then we set

$$x'_{\boldsymbol{i}_{1}\boldsymbol{i}_{2}\ldots \boldsymbol{i}_{k}}:=U_{1}'\big(\boldsymbol{i}\big)U_{2}'\big(\boldsymbol{i}\big)
\vee\cdots\vee U_{k}'\big(\boldsymbol{i}\big)$$
~\\
\noindent in $\mathcal{L}$ for any $k\geq1$ and $\boldsymbol{i}\in \mathcal{E}.$ Note that the construction of $x'_{\boldsymbol{i}_{1}\boldsymbol{i}_{2}\ldots \boldsymbol{i}_{k}}$ may change the choice of some original ``useless'' segments i.e. some $\Join_{k}$ and the words which be used to extend words in $\mathcal{L}$ into $\mathcal{G},$ but the lengths are invariant.

For any $k\geq1,$ by the compactness of $\big(X_{\mathcal{L}}, \sigma\big)$, define $$x'_{\boldsymbol{i}}:=
\lim\limits_{k\rightarrow+\infty}x'_{\boldsymbol{i}_{1}\boldsymbol{i}_{2}\ldots \boldsymbol{i}_{k}}.$$ Let $C$ be the collection of all these $x'_{\boldsymbol{i}}$' s.

We claim that $C$ is still a dynamically defined Moran fractal with full topological entropy.

Let $q'_{k}$ be the length of $U_{1}'\big(\boldsymbol{i}\big)U_{2}'\big(\boldsymbol{i}\big)\ldots U_{k}'\big(\boldsymbol{i}\big)$ for any $k\geq1.$ Fix any $i_{1}i_{2}\ldots i_{n}\in \mathcal{E}_{n},$ if there exists $k\geq1$ such that $i_{1}i_{2}\ldots i_{n}=\boldsymbol{i}_{1}\boldsymbol{i}_{2}\ldots \boldsymbol{i}_{k},$ then we set

$$l'_{n}=q'_{k}\ \textup{and}\ \Delta'_{i_{1}i_{2}\ldots i_{n}}=\overline{B_{q'_{k}}\big(x'_{\boldsymbol{i}_{1}\boldsymbol{i}_{2}\ldots \boldsymbol{i}_{k}}, \epsilon_{k}\big)}.$$
Otherwise, that is, there exists $k\geq1$ such that $\boldsymbol{i}_{1}\boldsymbol{i}_{2}\ldots \boldsymbol{i}_{k}$ is a prefix of $i_{1}i_{2}\ldots i_{n}$ and at the meanwhile
$i_{1}i_{2}\ldots i_{n}$ is a prefix of $\boldsymbol{i}_{1}\boldsymbol{i}_{2}\ldots \boldsymbol{i}_{k+1},$ then define
$$l'_{n}=q'_{k}+(n-\sum\limits_{j=1}^{k}N_{j})\cdot(n_{k+1}+t_{k+1}+\tau)+M_{k+1}$$
and
$$\Delta'_{i_{1}i_{2}\ldots i_{n}}=\bigcup_{\mbox{\tiny$\begin{array}{c}
  \boldsymbol{i}_{1}\boldsymbol{i}_{2}\ldots \boldsymbol{i}_{k}\textup{is a prefix of}\ i_{1}i_{2}\ldots i_{n} \textup{and}\\
  i_{1}i_{2}\ldots i_{n}\textup{is a prefix of}\ \boldsymbol{i}_{1}\boldsymbol{i}_{2}\ldots \boldsymbol{i}_{k+1}
  \end{array}$}}\overline{B_{q'_{k+1}}\big(x'_{\boldsymbol{i}_{1}\boldsymbol{i}_{2}\ldots \boldsymbol{i}_{k+1}}, \epsilon_{k+1}\big)},$$
where $M_{k+1}$ is the sum of the length of all insertions in $U_{1}'\big(\boldsymbol{i}\big)U_{2}'\big(\boldsymbol{i}\big)\ldots U'_{k+1}\big(\boldsymbol{i}\big).$

It is clear that $\bigcap\limits_{n=1}^{+\infty}\bigcup\limits_{{i_{1}i_{2}\ldots i_{n}\in \mathcal{E}_{n}}} \Delta_{i_{1}i_{2}\ldots i_{n}}$ is still a dynamically defined Moran fractal and $C$ is exactly the set $\bigcap\limits_{n=1}^{+\infty}\bigcup\limits_{{i_{1}i_{2}\ldots i_{n}\in \mathcal{E}_{n}}} \Delta'_{i_{1}i_{2}\ldots i_{n}}.$
Moreover, it is not hard to calculate that

$$l'_{n}-l_{n}<\sum\limits_{j=1}^{k+1}2s_{j}\big(n_{j}+t_{j}+\tau\big).$$

\noindent For each $k\ge1$, the unknown sequence $\big\{m_{k}\big\}_{k=1}^{+\infty}$ can decide the density of the insertions in $U_{1}'\big(\boldsymbol{i}\big)U_{2}'\big(\boldsymbol{i}\big)\ldots U_{k}'\big(\boldsymbol{i}\big)$. We choose $m_{k}$ large enough such that
$$\lim\limits_{k\rightarrow+\infty}\frac{\sum\limits_{j=1}^{k+1}2s_{j}
\big(n_{j}+t_{j}+\tau\big)}{l_{m_{k}}}=0,$$
which means that the density of the inserted chaotic parts is sufficiently small.

For each $n,$ there exists some integer $r$ such that $m_{r}\leq n< m_{r+1},$ we have
\begin{equation}\label{lnjixian}
\lim\limits_{n\rightarrow+\infty}\frac{l'_{n}}{l_{n}}\leq\lim\limits_{r\rightarrow+\infty}
\frac{\sum\limits_{j=1}^{r+1}2s_{j}\big(n_{j}+t_{j}+\tau\big)}{l_{m_{r}}}+1=1.
\end{equation}
~\\
\noindent By Proposition \ref{p2}, for any open set $U$ with $U\cap C\neq\emptyset,$ we have
$$h_{top}\big(\sigma, C\cap U\big)=\liminf\limits_{n\rightarrow+\infty}
\frac{\log{\#(\mathcal{E}_{n})}}{l'_{n}}.$$ 
Furthermore, for any $n\geq1,$ there exists some integer $w\geq1$ such that $N_{1}+N_{2}+\cdots +N_{w}\leq n<N_{1}+N_{2}+\cdots +N_{w+1},$ then
$$\begin{aligned}
&\quad\ \frac{\log{\#\big(\mathcal{E}_{n}\big)}}{l_{n}}>\frac{\log{\#\big(\mathcal{E}_{N_{1}+N_{2}+\ldots +N_{w}}\big)}}{l_{N_{1}+N_{2}+\cdots +N_{w+1}}}\\
&=\frac{\log{\#\big(\mathcal{E}_{N_{1}+N_{2}+\ldots +N_{w}}\big)}}{l_{N_{w-2}}}\cdot\frac{l_{N_{w-2}}}{l_{N_{1}+N_{2}+\cdots +N_{w+1}}} \\
&=\frac{N_{1}\log{c_{1}}+N_{2}\log{c_{2}}+\cdots +N_{w}\log{c_{w}}}{l_{w-2}}\cdot\frac{l_{w-2}}{l_{N_{1}+N_{2}+\cdots +N_{w+1}}}.
\end{aligned}
$$
By (\ref{lnln1}) and (\ref{lnjixian}) and

$$l_{w-2}\leq q_{w-1}\leq \sum\limits_{j=1}^{w}N_{j}\big(n_{j}+t_{j}+\tau\big)+\big(2w-1\big)\tau,$$
we have
$$\liminf\limits_{n\rightarrow+\infty}\frac{\log\#\big(\mathcal{E}_{n}\big)}{l_{n}}
\geq\lim\limits_{w\rightarrow+\infty}\frac{N_{1}\log{c_{1}}+N_{2}\log{c_{2}}+\cdots +N_{w}\log{c_{w}}}{\sum\limits_{j=1}^{w}N_{j}\big(n_{j}+t_{j}+\tau\big)+\big(2w-1\big)\tau}
=\lim\limits_{w\rightarrow+\infty}\frac{\log{c_{w}}}{n_{w}+t_{w}+\tau}.$$

\noindent It can be concluded that

$$h_{top}\big(\sigma, C\cap U\big)=\liminf\limits_{n\rightarrow+\infty}\frac{\log{\#\big(\mathcal{E}_{n}\big)}}{l'_{n}}
\geq\lim\limits_{r\rightarrow+\infty}\frac{\log{c_{r}}}{n_{r}+t_{r}+\tau}.$$
~\\
\noindent Recall that we have required $\lim\limits_{k\rightarrow +\infty}\frac{t_{k}+\tau}{n_{k}+t_{k}+\tau}=0$, we have

$$h_{top}\big(\sigma, C\cap U\big)=\liminf\limits_{n\rightarrow+\infty}\frac{\log{\#\big(\mathcal{E}_{n}\big)}}{l'_{n}}
\geq\lim\limits_{k\rightarrow+\infty}\frac{\log{c_{k}}}{n_{k}}=h_{top}\big(\sigma\big).$$

Now we prove that $C$ is a Xiong chaotic set. Suppose that $A$ is a non-empty subset of $C,$ and $F: A\rightarrow X_{\mathcal{L}}$ is a continuous map. For any $x\in A$ and any $u\geq1,$ define a non-negative integer $\phi_{u}\big(x\big)$ as follows:

\noindent if there exists $0\leq j\leq u$ and
$\rho_{u}\big(x\big)\in X_{\mathcal{L}}$ such that $$B_{q'_{u}}\big(x'_{\boldsymbol{i}_{1}\boldsymbol{i}_{2}\ldots \boldsymbol{i}_{u}}, \epsilon_{u}\big)\cap A\subset F^{-1}(B_{n_{j}}\big(\rho_{u}(x), \epsilon'\big)\big),$$then let

$$\begin{aligned}
\phi_{u}\big(x\big)&:=\max\Big\{0\leq j\leq u\ \big|\ \textup{there exists}\  \rho_{u}\big(x\big)\in X_{\mathcal{L}}\ \textup{such that}\\
&\quad\quad\quad\quad\quad\quad\quad\quad\quad B_{q'_{u}}\big(x'_{\boldsymbol{i}_{1}\boldsymbol{i}_{2}\ldots \boldsymbol{i}_{u}}, \epsilon_{u}\big)\cap A\subset F^{-1}\big(B_{n_{j}}\big(\rho_{u}(x), \epsilon'\big)\big)\Big\},
\end{aligned}
$$

\noindent otherwise, let $\phi_{u}\big(x\big)=0.$

Obviously, $0\leq\phi_{u}\big(x\big)\leq u.$ If $\phi_{u}\big(x\big)>0,$ then by the definition of $\phi_{u}\big(x\big)$ there exists a unique $\rho_{u}\big(x\big)$ such that $$B_{q'_{u}}\big(x'_{\boldsymbol{i}_{1}\boldsymbol{i}_{2}\ldots \boldsymbol{i}_{u}}, \epsilon_{u}\big)\cap A\subset F^{-1}\big(B_{n_{j}}\big(\rho_{u}(x), \epsilon'\big)\big).$$ 

Especially, we have
\vspace{0.5em}

(i) If $\phi_{u}\big(x\big)>0,$ $F\big(x\big)\in B_{n_{\phi_{u}(x)}}\big(\rho_{u}(x), \epsilon'\big).$
\vspace{0.5em}

(ii) The sequence $\big\{\phi_{u}(x)\big\}$ is increasing, i.e. $\phi_{u}\big(x\big)\leq\phi_{u+1}\big(x\big)$ for any $u\geq1.$
By the continuity of $F$ and
$$\lim\limits_{u\rightarrow+\infty}\textup{diam}\big(B_{q'_{u}}
(x'_{\boldsymbol{i}_{1}\boldsymbol{i}_{2}\ldots \boldsymbol{i}_{u}}, \epsilon_{u})\big)=0,$$
it is clear that
$$\lim\limits_{u\rightarrow+\infty}\textup{diam}
\big(F(B_{q'_{u}}(x'_{\boldsymbol{i}_{1}\boldsymbol{i}_{2}\ldots \boldsymbol{i}_{u}}, \epsilon_{u})\cap A)\big)=0.$$
\vspace{0.5em}

Therefore, for any $K>0,$ there is some $K_{1}>K$ such that $$\lim\limits_{u\rightarrow+\infty}\textup{diam}\big(F(B_{q'_{u}}(x'_{\boldsymbol{i}_{1}\boldsymbol{i}_{2}\ldots \boldsymbol{i}_{u}}, \epsilon_{u})\cap A)\big)<\frac{1}{2^{K}}$$ provided $u>K_{1}.$ This means that if $u>K_{1},$ then $\phi_{u}\big(x\big)\ge K.$ Hence
\vspace{0.5em}

(iii) $\lim\limits_{u\rightarrow +\infty}\phi_{u}\big(x\big)=+\infty.$
\vspace{0.5em}

(iv) For a fixed $m,$ there exists $k\big(x\big)>0$ such that $\phi_{u}\big(x\big)>m$ and $\rho_{u}\big(x\big)\in X_{\mathcal{L}}$ is well defined for
any $u\geq k\big(x\big).$ Therefore $$F\big(x\big)\in B_{n_{m}}\big(\rho_{u}(x), \epsilon'\big).$$
\vspace{0.5em}

Since $E_{m}$ is a maximal $\big(n_{m}, \epsilon'\big)$-separated set, then for each $\rho_{u}\big(x\big),$ there exists some $\mathbf{u}_{v}^{(m)}\big(x\big)$ in $E_{m}$ such that $$d_{n_{m}}\big(\rho_{u}(x), \mathbf{u}_{v}^{(m)}(x)\big)<\epsilon'$$ whenever $u\geq k\big(x\big).$

\noindent For any $x\in A,$ by the definition of $J_{m}\big(\boldsymbol{i}\big),$ there exists some $1\leq j_{m}\leq c_{m}$ and $\boldsymbol{i}_{1}\boldsymbol{i}_{2}\ldots\boldsymbol{i}_{m}$
such that $$\varphi_{j_{m}}^{(m)}\big(\boldsymbol{i}_{1}\boldsymbol{i}_{2}
\ldots\boldsymbol{i}_{m}\big)
=\mathbf{u}_{v}^{(m)}\big(x\big).$$
According to the construction of $x'_{\boldsymbol{i}},$ let

$$p_{m}=q'_{m}-\big(c_{m}-j_{m}\big)\cdot\big(n_{m}+t_{m}+\tau\big)$$
 \vspace{0.5em}

\noindent be the position of $\varphi_{j_{m}}^{(m)}\big(\boldsymbol{i}_{1}\boldsymbol{i}_{2}\ldots\boldsymbol{i}_{m}\big)$ in $x'_{\boldsymbol{i}}.$ Then the sequence $\big\{p_{m}\big\}_{m=1}^{+\infty}$ is increasing and satisfies
$$\sigma^{p_{m}}\big(x\big)\big[1, n_{m}\big]=\mathbf{u}_{v}^{(m)}\big(x\big)\big[1, n_{m}\big].$$

\noindent Thus, $$d_{n_{m}}\big(\sigma^{p_{m}}(x), \rho_{u}(x)\big)<\epsilon'$$
holds for any $x\in A$ and any $u\geq k\big(x\big).$

As we have shown in (iv), for any $x\in A,$ there exists $k\big(x\big)>0$ such that $$\phi_{u}\big(x\big)>m$$ holds for any $u\geq k\big(x\big).$ Then $$F\big(x\big)\in B_{n_{m}}\big(\rho_{u}(x), \epsilon'\big).$$ This implies that from the time $k\big(x\big),$ we have $$d_{n_{m}}\big(\sigma^{p_{m}}(x), F(x)\big)<2\epsilon'<\frac{\eta}{4}.$$ Applying lemma \ref{l1}, we have $$\lim
\limits_{m\rightarrow+\infty}\sigma^{p_{m}}\big(x\big)=F\big(x\big).$$
\end{proof}
~\\
\noindent\textbf{Proof of Theorem \ref{T1}} We ensure that the Xiong chaotic set has full topological entropy everywhere by scattering $C$ into each non-empty open subset of $X_{\mathcal{L}}.$ Let
$$\Gamma:=\bigcup\limits_{k=1}^{+\infty}
\bigcup\limits_{\boldsymbol{i}_{1}\boldsymbol{i}_{2}\ldots\boldsymbol{i}_{k}\textup{is}\, \mathcal{E}\textup{-admissible}}\Big\{C\cap \overline{B_{q'_{k}}\big(x'_{\boldsymbol{i}_{1}\boldsymbol{i}_{2}\ldots \boldsymbol{i}_{k}}, \epsilon_{k}\big)}\Big\}.$$
Recalling (C3) of Definition \ref{d4}, we can pick up a countable subfamily 
$$\bigcup\limits_{k=1}^{+\infty}\big\{R_{1}^{(k)}, R_{2}^{(k)}\ldots, R_{c_{k}}^{(k)}\big\}\subset\Gamma$$
such that every pair of distinct elements in this subfamily is disjoint, and for $k\geq1,$
$$\big\{R_{1}^{(k)}, R_{2}^{(k)}\ldots, R_{c_{k}}^{(k)}\big\}\subset \bigcup\limits_{\boldsymbol{i}_{1}\boldsymbol{i}_{2}\ldots\boldsymbol{i}_{k}\textup{is}\, \mathcal{E}\textup{-admissible}}\Big\{C\cap \overline{B_{q'_{k}}\big(x'_{\boldsymbol{i}_{1}\boldsymbol{i}_{2}\ldots \boldsymbol{i}_{k}}, \epsilon_{k}\big)}\Big\}.$$
Let $R_{h}^{(k)}:=C\cap\overline{B_{q'_{k}}(x'_{\boldsymbol{i}_{1}\boldsymbol{i}_{2}\ldots \boldsymbol{i}_{k}}, \epsilon_{k})}$ 
for some fixed integers $k\geq1$ and $1\leq h\leq c_{k}.$  Define a map $\Psi_{h}^{(k)}: R_{h}^{(k)}\rightarrow X_{\mathcal{L}}$ by setting, for any $\alpha\in R_{h}^{(k)},$
$$\Psi_{h}^{(k)}\big(\alpha\big)=z_{h}^{(k)}\big[1, n_{k}\big]\gamma_{h}^{(k)}\Join_{k} \vee \sigma^{n_{k}+t_{k}+\tau}\big(\alpha_{\boldsymbol{i}_{1}\boldsymbol{i}_{2}\ldots \boldsymbol{i}_{k}}\big).$$
Although the word produced by (W)-specification need not be unique, the value of $\Psi_{h}^{(k)}$ is independent of the choice of the word. Indeed, for any $\alpha\neq\beta\in R_{h}^{(k)},$ we have $\Psi_{h}^{(k)}(\alpha)=\Psi_{h}^{(k)}\big(\beta\big).$ It follows that $\Psi_{h}^{(k)}$ is constant on each $R_{h}^{(k)}$, which guarantees its continuity. Define
$$\mathcal{R}:=\bigcup\limits_{k=1}^{+\infty}\bigcup\limits_{h=1}^{s_{k}}R_{h}^{(k)}.$$

We now define a global map $\Psi: \mathcal{R}\rightarrow X_{\mathcal{L}}$ by setting $\Psi\big|_{R_{h}^{(k)}}=\Psi_{h}^{(k)}$ for each pair $(k,h)$. The continuity of $\Psi$ follows directly from (C3) of Definition \ref{d4}. Moreover, the definition of $\Psi_{h}^{(k)}$ ensures that $\Psi(R_{h}^{(k)})$ is contained in $B_{n_{k}}\big(\mathbf{u}_{h}^{(k)}. 3\epsilon_{k}\big).$ Finally, since $\Psi\big(R_{h}^{(k)}\big)$ shares the same dynamically defined Moran structure as   $C\bigcap\overline{B_{q'_{k}}\big(x'_{\boldsymbol{i}_{1}\boldsymbol{i}_{2}\ldots \boldsymbol{i}_{k}}, \epsilon_{k}\big)},$ we have  $h_{top} \big( \Psi(R_{h}^{(k)})\big)=h_{top}(\sigma)$.

To verify the entropy condition, fix an arbitrary non-empty open set $U\subset X_{\mathcal{L}}$. By the maximality of the separated sets $E_{k}$, we can find some $k\geq1$, $j\in \big\{1, 2,\ldots ,c_{k}\big\}$ and $\mathbf{u}_{j}^{(k)}\in E_{k}$ such that
$$\Psi\big(R_{j}^{(k)}\big)\subset B_{n_{k}}\big(\mathbf{u}_{j}^{(k)}, 3\epsilon_{k}\big)\subset U.$$
Hence, the set $C'=\Psi\big(\mathcal{R}\big)$ has full topological entropy everywhere. 

We finally show that $C'=\Psi\big(\mathcal{R}\big)$ is still a Xiong chaotic set. Let $B$ be a non-empty subset of $C'$ and let $F: C'\rightarrow X_{\mathcal{L}}$ be continuous. Then $F\circ\Psi$ is a continuous map from $\Psi^{-1}\big(B\big)$ to $X_{\mathcal{L}}.$ Therefore, following the argument in lemma \ref{L4}, we obtain a increasing sequence $\big\{p_{k}\big\}_{k=1}^{+\infty}$ of positive integers such that for any $x=\Psi\big(y\big)\in B$ (where $y\in \Psi^{-1}(B)$), we have

$$\lim\limits_{k\rightarrow +\infty}\sigma^{p_{k}}\big(x\big)=\lim\limits_{k\rightarrow +\infty}\sigma^{p_{k}}\big(y\big)=F\circ\Psi\big(y\big)=F\big(x\big).$$
 This completes the proof of Theorem \ref{T1}.\\

\end{document}